\newtheorem{lem}{Lemma}
\newtheorem{thm}[lem]{Theorem}
\newtheorem{prob}[lem]{Problem}
\newtheorem{prop}[lem]{Proposition}
\newtheorem{defn}[lem]{Definition}
\newtheorem{rmk}[lem]{Remark}
\newtheorem{ex}[lem]{Example}
\newtheorem{conv}[lem]{Convention}
\newcommand{\PP}{\mathbb{P}}
\newcommand{\CC}{\mathbb{C}}
\newcommand{\ZZ}{\mathbb{Z}}
\newcommand{\FF}{\mathbb{F}}
\tikzset{My Arrow Style/.style={single arrow, fill=red!40, anchor=base, align=center,text width=2.8cm}}
\tikzset{
    auto,node distance =1 cm and 1 cm,semithick,
    state/.style ={ellipse, draw, minimum width = 0.7 cm},
    bidirected/.style={Latex-Latex,dashed},
    connected/.style={dashed,-},
}
\numberwithin{lem}{section}
\begin{document}

\title[Local Calabi--Yau 3-folds for some shrinkable surfaces]{Local Calabi--Yau 3-folds for some rank 2 shrinkable surfaces}
\author{Sungwoo Nam}
\address{Department of Physics, POSTECH, Pohang 790-784, Korea}
\email{sungwoo2@postech.ac.kr}

\begin{abstract}
Motivated by 5d rank 2 SCFTs, we construct a smooth, non-compact Calabi--Yau 3-fold $X$ containing a rank 2 shrinkable surface $S=S_1\cup S_2$ glued over a smooth curve. This construction will be a generalization of the construction of a local surface for a smooth surface $S$.
\end{abstract}

\maketitle

\section{Introduction}
Given a projective surface $S$, it is often desirable to have a (not necessarily compact) Calabi--Yau (CY) 3-fold $X$ containing $S$ for various reasons. When we can take $X$ to be a small neighborhood of $S$ in any CY 3-fold in an appropriate sense, such $X$ would be called a \emph{local CY 3-fold of $S$}. Mathematically, they are studied extensively from the points of view of mirror symmetry, enumerative geometry, and canonical 3-fold singularities \cite{Hosono,LocMi,XieYau}. Such local CY 3-folds appear in many contexts of physics as well, such as M-theory and the five-dimensional superconformal field theories (SCFTs) \cite{JKKV, Mosei}. For a smooth surface $S$, there is a well-defined notion of \emph{the local surface of $S$} and it is defined by Tot$(\omega_S)$ where $\omega_S$ is the canonical line bundle of $S$. This is exactly a CY 3-fold containing $S$ via embedding by the zero section of $\omega_S.$ For singular surfaces, this construction no longer gives us a smooth CY 3-fold. This is because, even when there is a dualizing line bundle $\omega_S$, the total space of this line bundle is singular.

In this paper, we study this problem of constructing local CY 3-folds in the context of 3-fold canonical singularities, motivated by 5d SCFTs. Recall that 3-fold canonical singularities are not necessarily isolated. Moreover, if they are not isolated, then they are du Val singularities in codimension at least 2. That is, analytically locally it is isomorphic to 
\begin{equation}
    \text{du Val singularity}\times\mathbb{A}^{1}
\end{equation}
around a general point of any codimension 2 strata.
See, for instance \cite{reid}. These 3-fold canonical singularities are closely related to the physics of 5d SCFTs as studied in \cite{XieYau}. 

However, the classification of 3-fold canonical singularities is notoriously hard. Instead, in \cite{JKKV}, authors used the notion of \emph{shrinkable surfaces} to study 5d SCFTs. Up to a notion called \emph{physical equivalence}, they provide conjectural geometric classifications of 5d SCFTs in terms of shrinkable surfaces. Although we do not discuss physical equivalence in detail in this paper, roughly speaking, it relates different surfaces that give rise to same physics and in particular, same local GW theory (see for instance Remark~\ref{cau}). Authors of \cite{JKKV} also conjectured that shrinkable surfaces can be characterized using the intersection theory of an ambient CY 3-fold. Motivated by this, we define the notion of (pre-)shrinkable surfaces (see also \cite{preprint}) using the following definition of shrinkable CY 3-folds.

\begin{defn}\label{shr}
    Let $S=\cup^n_iS_i$ be a projective simple normal crossing (snc) surface, i.e., a projective surface with each $S_i$ smooth projective and $X$ be a smooth Calabi--Yau 3-fold containing $S$. Then $X$ is called shrinkable of rank $n$ if the following three conditions hold.  Let $J=\sum_{i=1}^n a_i[S_i]$ be an integral divisor class of a Calabi--Yau 3-fold $X$ containing $S$.
    \begin{enumerate}[label=(\roman*).]\label{defsh}
        \item There are integers $a_i\geq0$ such that \begin{equation*}\label{usingcy}
        -C\cdot J\geq0
    \end{equation*} for any curve $C$ on $S$. The notation $\cdot$ denotes an intersection product on $X$. 
    \item For each $i$, 
    \begin{equation*}\label{eq1}
        J^2S_i\geq0.
    \end{equation*} 
   \item For at least one $i$, \begin{equation*}\label{eq2}
       J^2S_i>0.
   \end{equation*}
    \end{enumerate} 
    
    \end{defn}
    The intersection numbers $-J\cdot C$ and $J^2S_i$ in the above definitions are what physicists call volumes of 2-cycles and 4-cycles, respectively.
    Observe that the intersection products in the above condition are independent of a choice of a smooth CY 3-fold $X$ containing $S$ and it can be computed intrinsically (without using $X$) on $S$ as follows. When $C$ is an irreducible curve
    \begin{align}\label{intersection1}
        C\cdot S_i=\text{deg}(\omega_{S_i}\vert_C) &\text{ if }C\subset S_i\\\label{intersection2}
        C\cdot S_j=(C\cdot (S_i\cap S_j))_{S_i} &\text{ if }C\not\subset S_j
    \end{align}
    and we can extend linearly. Similarly, the triple intersection formulas in ~(\ref{eq1}) and ~(\ref{eq2}) can be computed intrinsically on $S$. 
    
    To formulate an intrinsic notion of a shrinkable surface, we impose one necessary condition for an snc surface to be embeddable in a shrinkable 3-fold. The following condition is called \emph{the Calabi--Yau condition}. For each $i\neq j$, we require
\begin{equation}\label{CY}
    (C_{ij})^2_{S_i}+(C_{ij})^2_{S_j}=2g(C_{ij})-2.
\end{equation}
for $C_{ij}=S_i\cap S_j$ where self-intersection is computed in each component $S_i$ and $S_j$. Here $g(C_{ij})$ denotes the arithmetic genus of $C_{ij}.$ It is elementary to see that this is a necessary condition for $S$ to be in a CY 3-fold.

    We now define a main character of this paper.
    
    \begin{defn}\label{preshrink}
        Let $S=\cup_i^nS_i$ be an snc surface. We say $S$ is \emph{pre-shrinkable} of rank $n$ if it satisfies the Calabi--Yau condition~(\ref{CY}) and conditions in (i)-(iii) in Definition~\ref{shr}.

    We then define $S$ to be \emph{shrinkable} if it is pre-shrinkable and if there exists a smooth CY 3-fold containing $S$.

    \end{defn}
   \begin{rmk}
       In fact, we can weaken the condition (iii) of Definition~\ref{shr} to allow examples with $J^2S_i=0$ for all $i$ while satisfying (i). There exist such surfaces, for instance, $\FF_0\cup\FF_{10}$. These geometries are expected to come from 6d theory. As we will focus on surfaces from 5d SCFTs, we stick to Definition~\ref{preshrink}.
   \end{rmk}

\begin{ex}
    If $S$ is a smooth projective surface, this condition reduces to deg$(\omega_S\vert_C)\leq0$ for any curve $C$ on $S$ and $K_S^2>0$. These are the surfaces whose canonical bundle $\omega_S$ is nef with $K_S^2>0$.

    For examples of rank 2 pre-shrinkable surfaces, see Section ~\ref{classification}.
\end{ex}

As we can see from this example, shrinkable surfaces can be seen as generalizations of contractible surfaces in a CY 3-fold. For instance, del Pezzo surfaces are shrinkable and well-known to be contractible on their local surface. The Hirzebruch surface $\FF_2$ is also shrinkable and it can be contracted to a point together with a non-compact divisor, see Example~\ref{f2sh}. In general, shrinkable surfaces are expected to contract to unions of points and non-compact curves, engineering 3-fold canonical singularities \cite{XieYau}.

In \cite{JKKV}, physics associated with rank 2 pre-shrinkable surfaces was studied. However, for rank $>1$ pre-shrinkable surfaces, we do not in general know whether every pre-shrinkable surface is a shrinkable surface. This motivates the following problem, which we call the embedding problem.

\begin{prob}\label{embedding}
   Is every pre-shrinkable surface of rank $n>1$ shrinkable?
\end{prob} 
In physics, for rank 2 surfaces in \cite{JKKV}, a smooth CY 3-fold can be obtained from k\"ahler deformations of elliptic CY 3-folds containing pre-shrinkable surfaces, so we expect a positive answer. The goal of this paper is to provide mathematical justification for this for rank 2 surfaces.

Since shrinkable CY 3-folds are used to engineer 5d SCFTs via M-theory, there are  BPS states. Such BPS states are generated by M2 branes wrapped on holomorphic curves and are thus expected to be related to curve-counting theories. The local Gromov--Witten (GW) theory of shrinkable surfaces was proposed in \cite{preprint} to build this connection. In particular, it has a well-defined meaning as a contribution to the 3-fold invariants assuming a positive answer to Problem~\ref{embedding}. 

For the remaining part of this paper, we write $S_1\cup_C S_2$ for a rank 2 surface where $C=S_1\cap S_2$ is the double curve, which we will assume to be \emph{smooth}. We suppress $C$ from the notation if it is clear from $S_1$ and $S_2$. The following is the main construction of this paper.

\begin{thm}
    Let $S$ be a pre-shrinkable surface $S=S_1\cup_C S_2$, $C\simeq \PP^1$. Then $S$ is embeddable in a smooth, non-compact CY 3-fold if
    \begin{enumerate}
        \item $S_2\simeq \FF_n$ with $C$ being the $(-n)$ curve for $n>0$, $S_2\simeq \FF_0$ with $C$ being a ruling or $S_2\simeq \FF_1$ with $C$ a section of $\FF_1\to\PP^1$ with $C^2=1$.
        \item $S_2\simeq\PP^2$ and $C$ is a line in $\PP^2$.
    \end{enumerate}
\end{thm}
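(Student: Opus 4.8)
The plan is to construct $X$ by gluing the two naive local surfaces $\mathrm{Tot}(\omega_{S_1})$ and $\mathrm{Tot}(\omega_{S_2})$ along a common neighbourhood of $C$; this is the natural replacement for $\mathrm{Tot}(\omega_S)$, which is singular exactly along $C$ because $S$ is. The gluing is made possible by the following bookkeeping along $C$. Since $C\simeq\PP^1$, the Calabi--Yau condition~(\ref{CY}) reads $C^2_{S_1}+C^2_{S_2}=-2$, and adjunction on each $S_i$ gives $\deg(\omega_{S_i}|_C)=-2-C^2_{S_i}$; as line bundles on $\PP^1$ are determined by their degree,
\[
\omega_{S_1}|_C\;\cong\;N_{C/S_2},\qquad \omega_{S_2}|_C\;\cong\;N_{C/S_1}.
\]
Consequently, inside $\mathrm{Tot}(\omega_{S_2})$ the ``vertical'' surface $\Sigma_1:=\mathrm{Tot}(\omega_{S_2}|_C)$ through $C$ is a non-compact surface with the same normal bundle along $C$ as $S_1$, and symmetrically $\Sigma_2:=\mathrm{Tot}(\omega_{S_1}|_C)\subset\mathrm{Tot}(\omega_{S_1})$ has the same normal bundle along $C$ as $S_2$.

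For the actual gluing I would introduce the local model $W:=\mathrm{Tot}_{\PP^1}\!\big(\mathcal{O}(C^2_{S_1})\oplus\mathcal{O}(C^2_{S_2})\big)$. As the two degrees sum to $-2=\deg\omega_{\PP^1}$ one has $\omega_W\cong\mathcal{O}_W$, so $W$ is Calabi--Yau, and it contains the zero section $C$ and the two sub-bundle total spaces $V_1,V_2$ meeting transversally along $C$ with the prescribed normal bundles. The key local statement to prove is that a neighbourhood of $C$ in $\mathrm{Tot}(\omega_{S_2})$ is isomorphic to a neighbourhood of $C$ in $W$ carrying $(S_2,\Sigma_1)$ to $(V_2,V_1)$, and symmetrically a neighbourhood of $C$ in $\mathrm{Tot}(\omega_{S_1})$ is isomorphic to one in $W$ carrying $(S_1,\Sigma_2)$ to $(V_1,V_2)$. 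Granting this, I would set
\[
X\;:=\;\mathrm{Tot}(\omega_{S_1})\;\cup_{W^\circ}\;\mathrm{Tot}(\omega_{S_2}),
\]
glued along a common open neighbourhood $W^\circ$ of $C$ in $W$ via the identity of $W$ (which preserves $V_1$ and $V_2$). Then $X$ is smooth and non-compact because each piece is; $\omega_X$ is trivial because $W$ and each $\mathrm{Tot}(\omega_{S_i})$ carry canonical nowhere-vanishing $3$-forms agreeing up to a nonzero constant on the overlap; and $X$ contains $S=S_1\cup_C S_2$, realised as the union of the two zero sections, which meet exactly along $C$ (the surfaces $\Sigma_i$ enter $W^\circ$ only near $C$, so in $X$ each $S_i$ is just the zero section of $\mathrm{Tot}(\omega_{S_i})$). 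To make $X$ a separated scheme I would take the gluing neighbourhoods as large as possible --- for the listed pairs a neighbourhood of $C$ in $S_i$ can be taken to be all of $\mathrm{Tot}(\mathcal{O}(C^2_{S_i}))$, i.e.\ $S_i$ with a single curve removed --- so that approaching the boundary of $W^\circ$ inside one piece forces divergence in the other; alternatively one carries out the whole construction in the analytic category.

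The main obstacle is the key local statement, which is exactly \emph{linearisability of $C$ in each $S_i$}: that $C$ has a neighbourhood in $S_i$ equal to the total space of $N_{C/S_i}$ over $\PP^1$ (matching of the remaining fibre direction is then automatic, since line bundles on such a neighbourhood are pulled back from $\PP^1$). For $S_2$ this is precisely where the hypothesis is used: for each pair in~(1) and~(2) one checks directly that a neighbourhood of $C$ in $S_2$ is $\mathrm{Tot}(\mathcal{O}(C^2_{S_2}))$ --- for instance $\PP^2$ minus a point equals $\mathrm{Tot}(\mathcal{O}(1))$ for a line, $\FF_n$ minus its section of square $n$ equals $\mathrm{Tot}(\mathcal{O}(-n))$ for the $(-n)$ curve, and similarly $\FF_1$ and $\FF_0$ minus a suitable curve. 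For $S_1$ one has $C^2_{S_1}=-2-C^2_{S_2}$: linearisability is automatic when $C^2_{S_1}\le 0$, and in the remaining cases --- which arise only when $S_2\simeq\FF_n$ with $n\ge 3$, so that $C^2_{S_1}=n-2\ge 1$ --- I would invoke the constraints of Definition~\ref{preshrink}: conditions (i)--(iii) together with~(\ref{CY}) restrict $S_1$ and the position of $C$ in it to the explicit list of Section~\ref{classification}, and one verifies linearisability case by case there. Carrying out this finite case analysis, together with the elementary local identifications and the separatedness check, is the bulk of the work; the Calabi--Yau and smoothness properties then follow immediately from the construction.
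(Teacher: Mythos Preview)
Your approach is genuinely different from the paper's. Rather than gluing two local surfaces, the paper works globally on $S_1$ via orbifolds. For case~(1) it forms the square-root stack $\mathcal{S}=\sqrt{C/S_1}$; the coarse space of $\mathrm{Tot}(\omega_{\mathcal{S}})$ is then a singular CY $3$-fold with transverse $A_1$ singularities along $C$ in the zero section (Lemma~\ref{root}), and a single crepant blow-up of this locus produces a smooth CY $3$-fold whose exceptional divisor is a $\PP^1$-bundle over $C$ that one identifies with $S_2$ via the Calabi--Yau condition. For case~(2) one instead contracts the $(-3)$-curve $C\subset S_1$ to a $\tfrac{1}{3}(1,1)$ point, passes to the canonical stack, and resolves the resulting isolated $\tfrac{1}{3}(1,1,1)$ singularity on the total space of its canonical bundle to recover $\PP^2$ as exceptional divisor (Theorem~\ref{mainthm2}). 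Both constructions output a quasi-projective variety directly and never need to linearise $C$ inside $S_1$; the price is that one must identify the exceptional $\PP^1$-bundle, and indeed the paper notes (Remark~\ref{cau}) that for $(C^2)_{S_1}<-3$ the self-intersection alone does not pin it down.

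The real gap in your argument is the linearisability of $C$ in $S_1$ when $C^2_{S_1}>0$, which you defer to a case check. For several entries in the tables this check fails outright in the Zariski topology: if $S_1=\PP^2$ and $C$ is a smooth conic (the case $\PP^2\cup\FF_6$), every effective curve on $\PP^2$ meets $C$, so no Zariski open of $\PP^2$ containing $C$ can be $\mathrm{Tot}(\mathcal{O}(4))$; the same obstruction applies to the $(1,1)$-curve in $\FF_0$ (the case $\FF_0\cup\FF_4$) and to the sections $e+kf$ in $\FF_1$ with $k\ge 2$. One is then forced into the analytic category, but analytic linearisability of a rational curve with positive normal bundle is not automatic either --- the relevant obstruction groups $H^1\bigl(\PP^1, T_{S_1}|_C\otimes N^{*\otimes n}\bigr)$ are nonzero --- and you give no argument that the obstructions vanish in these specific geometries. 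Your separatedness sketch, by contrast, can be made rigorous once one observes that approaching the boundary of $W^\circ$ in one piece means going to infinity in the fibre direction of the other; but without Zariski linearisability the glued object is at best a complex manifold rather than a variety, whereas the paper's orbifold route produces a quasi-projective CY $3$-fold.
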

This theorem covers 61 rank 2 pre-shrinkable surfaces out of all 64 appearing in \cite{JKKV} up to physical equivalence\footnote{These 64 surfaces do not cover all rank 2 5d SCFTs. In \cite{bootstr}, it was pointed out that there are 5d SCFTs that do not yet have geometric descriptions using shrinkable 3-folds. For rank 2, two such theories are called local $\PP^2\cup\FF_6+``1\textbf{Sym}"$ and $SU(3)_8$. The classification of 5d SCFTs is not yet complete. In this paper we only focus on SCFTs that have geometric description.}. Together with a toric construction, 62 pre-shrinkable surfaces are indeed proven to be shrinkable. For the other two pre-shrinkable surfaces, our result can be applied to construct non-pre-shrinkable surfaces that are physically equivalent to those surfaces (see Remark~\ref{cau}). The novelty of this theorem is that it can be applied to non-toric surfaces such as $\PP^2\cup\FF_6$ or $\FF_0\cup\FF_6$. See Section~\ref{classification}. 

Our proof uses the construction of local surfaces for \emph{orbifolds}. Using orbifolds, we can engineer transverse $A_n$-singularities on a curve on a surface in a CY 3-fold.

Having constructed a CY 3-fold containing $S$, we can define the local GW invariants of $S$ if the moduli spaces of stable maps to $S$ and $X$ are identical as Deligne-Mumford stacks. The definition, together with an interpretation of these invariants, is discussed in the companion paper \cite{preprint}.

For interesting examples, we examine some toric constructions, the weighted projective stacks of dimension 2. They can be contracted to a 3-fold cyclic quotient singularities which are extensively studied under the names of $\CC^3/G$ orbifolds and $G$-Hilbert schemes. The root construction on some weighted projective planes can be easily understood using toric diagrams \cite{toricdm}.
Moreover, it has at worst cyclic quotient singularities. As such, it provides many examples of the main theorems. From the SCFT point of view, it generates many higher-rank examples (See \cite{orbifold}, Section 3.1.1). Also there are plenty of examples of transverse singularities with \emph{dissident points}. These cannot just be obtained by taking a root stack or a canonical stack (introduced in Section~\ref{dmorbi}) to an underlying projective surface but a combination of these operations. Understanding these examples in terms of combining the two stack-theoretic operations and generalizing to non-toric examples might provide more solutions to Problem~\ref{embedding}.

\smallskip\noindent
{\bf Notations.} Throughout, we use the following notations. For $\FF_0$, we denote the two rulings by $f_1$ and $f_2$. If $n>0$, then we use $e$ to denote the curve $e^2=-n$, and $f$ is the fiber class of $\FF_n$. For $\PP^2$, $\ell$ is the class of a line. Lastly, $dP_n$ and Bl$_nS$ denote the blow-ups of $\PP^2$ and $S$ respectively, at $n$-general points, and $x_i$ denotes the exceptional $(-1)$ curve from blowing up $i$th point.

\smallskip\noindent
{\bf Acknowledgements.}  We would like to thank Sheldon Katz for his encouragement to write this paper and numerous helpful discussions. We would also like to thank Hee-Cheol Kim for physical intuition on 5d SCFTs and for clarifying the contents of \cite{bootstr} and \cite{JKKV}.

\section{DM stacks and canonical bundles}\label{dmorbi}
One ingredient for our local Calabi--Yau construction is the notion of stacks or orbifolds. We refer to \cite{sasaki} Chapter 4 or \cite{rossthomas} for foundational materials on orbifolds and orbifold line bundles. Here we briefly review relevant notions to fix the notations. In this paper, we only consider stacks with the following assumptions. This is mostly to ensure the existence of a canonical (dualizing) line bundle.

\begin{conv}
    By a DM stack, we mean a smooth, separated, tame Deligne-Mumford stack of finite type over $\CC$. By an orbifold, we mean a DM stack with a generically trivial stabilizer. 

    We further assume that the stabilizer groups of points on a DM stack are cyclic and the coarse moduli space $X$ of a DM stack $\mathcal{X}$ is a quasiprojective variety.
\end{conv}

Based on the results in \cite{bottom}, such DM stacks or orbifolds can be constructed starting from a variety with tame quotient singularities. There are two operations to get a smooth DM stack from such a variety. For the explicit definition of these constructions, see \cite{bottom}.

\begin{enumerate}
    \item The canonical stack morphism $\mathcal{X}^{\mathrm{can}}\to \mathcal{X}$. This is associated with a smooth DM stack $\mathcal{X}$ with the coarse moduli space morphism $\pi:\mathcal{X}\to X$ being an isomorphism away from codimension 2 loci. This is the terminal object in the category of orbifolds with dominant, codimension-preserving morphism to $X$.

    \item The $n$th root stack morphism $\sqrt[n]{\mathcal{D}/\mathcal{X}}\to\mathcal{X}$ along an effective Cartier divisor $\mathcal{D}$ of a smooth DM stack $\mathcal{X}$. We omit $n$ from the notation if $n=2.$
\end{enumerate}
Although these two operations can be defined for more general DM stacks, for our application we focus on the cases in which we have either of the following cases
\begin{enumerate}
    \item where $\mathcal{X}$ is a projective surface with finite quotient singularities so that $\mathcal{X}^{\mathrm{can}}$ exists and is smooth as a DM stack.
    \item where $\mathcal{X}$ is a smooth projective surface and $\mathcal{D}=\sum D_i$ is a normal crossing divisor where each $D_i$ is a smooth divisor so that the root stack $\sqrt[n]{\mathcal{D}/\mathcal{X}}$ is smooth as a DM stack.  
\end{enumerate}

An orbifold can be defined using charts and atlas like smooth manifolds. An orbifold chart for an $n$-dimensional orbifold $\mathcal{X}$ is given by $(\widetilde{U},\Gamma,\varphi)$ such that 
\begin{itemize}
\item an open connected set $\widetilde{U}\simeq \CC^n$ containing the origin
\item a finite group $\Gamma$ (called a uniformizing group) acting effectively on $\widetilde{U}$
    \item an $\Gamma$-invariant map $\varphi:\widetilde{U}\to U\subset\mathcal{X}$ which induces an homeomorphism $\widetilde{U}/\Gamma\simeq U$ to an open subset $U$ of $\mathcal{X}.$
\end{itemize}
The notion of an atlas and a refinement of an atlas generalize straightforwardly to orbifolds. See \cite{sasaki}, Chapter 4 for details.

An orbifold line bundle is locally defined on a chart $(\widetilde{U},\Gamma_i,\varphi_i)$ by a fiber bundle $E_{\widetilde{U}_i}$ with a fiber $G$-representation $\CC$ with a homomorphism $h_i:\Gamma_i\to G$ such that
\begin{itemize}
    \item  if $b$ lies in the fiber over $\widetilde{x}_i\in\widetilde{U}_i$ then $h_i(\gamma)\cdot b$ lies in the fiber of $\gamma^{-1}\cdot\widetilde{x}_i$

    \item if $\lambda_{ji}:\widetilde{U}_i\to \widetilde{U}_j$ is an embedding, there is a transition map 
    \begin{equation}
        \lambda_{ji}^*:E\vert_{\widetilde{U}_j\vert_{\lambda_{ij}(\widetilde{U_{i}})}}\to E\vert_{\widetilde{U}_i}
    \end{equation}
    that satisfies the following condition. If $\gamma\in\Gamma_i$ and $\gamma'\in\Gamma_j$ is the unique element satisfying $\lambda_{ji}\circ \gamma=\gamma'\circ\lambda_{ji}$ then $h_{\widetilde{U}_i}(\gamma)\circ\lambda_{ji}^*=\lambda_{ji}^*\circ h_{\widetilde{U}_j}(\gamma').$
    \item If $\lambda_{kj}:\widetilde{U}_j\to\widetilde{U}_k$ is another embedding then $(\lambda_{kj}\circ\lambda_{ji})^*=\lambda_{ji}^*\circ\lambda_{kj}^*.$
\end{itemize}

In particular, each fiber can have non-trivial group action by a uniformizing group. 

A section $s$ of an orbifold line bundle $E$ on $\mathcal{X}$ can be defined by the following data. 

\begin{itemize}
    \item Over $(\widetilde{U}_i,\Gamma_i,\varphi)$, let $\widetilde{x}\in \widetilde{U}_i$. For each $\gamma\in \Gamma_i$ 
    \begin{equation}
        s_i(\gamma^{-1}\widetilde{x})=h_{\widetilde{U}_i}(\gamma)\cdot s_i(\widetilde{x})
    \end{equation}
    \item If $\lambda_{ji}:\widetilde{U}_i\to \widetilde{U}_j$ is an embedding then 
    \begin{equation}
        \lambda_{ji}^*s_j(\lambda_{ji}(\widetilde{x}))=s_i(\widetilde{x}).
    \end{equation}
\end{itemize}

Note that the zero section is always a section of any orbifold line bundle.

The total space $E$ (abusing notation, we use the same letter to denote the bundle and its total space) of an orbifold line bundle has an orbifold structure induced from that of the base. Consider an orbifold chart $(\widetilde{U}_i,\Gamma_i,\varphi_i)$ which is also trivializing $E$. Then we define an orbifold structure on the total space by charts of the form $(\CC\times \widetilde{U}_i,\Gamma'_i,\varphi'_i)$. Here we extend the action of $\Gamma_i$ to $\CC\times \widetilde{U}_i$ by $\gamma\cdot(b,\widetilde{x}_i)=(h_i(\gamma)\cdot b,\gamma^{-1}\widetilde{x})$. Then $\Gamma_i'$ is the subgroup of $\Gamma_i$ that stabilizes ($b,\widetilde{x}_i).$ 

The notion of divisors also generalizes. It is convenient to work with $\mathbb{Q}$-divisors. For complex orbifolds, every Weil divisor has a multiple, which is Cartier. A Baily divisor is a $\mathbb{Q}$-Weil divisor whose inverse image in every uniformizing chart $\varphi:\widetilde{U}\to U$ is Cartier with some natural compatibility conditions. For a Baily divisor $D$, we can also define its associated line bundle $\mathcal{O}(D)$. For us, it is enough to note that the orbifold canonical divisor $K^{orb}_{\mathcal{X}}$ is a Baily divisor and its associated line bundle is the orbifold canonical line bundle $\omega_{\mathcal{X}}$. It is also characterized as a sheaf of holomorphic differential forms which is useful for local calculations. 

Of particular interest to us, for orbifolds considered in this paper, there exists an orbifold canonical line bundle. The orbifold canonical bundle is not just a pullback of the canonical bundle of the coarse moduli space. It receives corrections from the ramification loci.

\begin{prop}(\cite{sasaki})
   Let $\pi:\mathcal{X}\to X$ is the coarse moduli space map ramified over divisors $D_i$ with index $m_i$. Then the orbifold canonical divisor is given by
   \begin{equation}
       K_\mathcal{X}^{orb}=\pi^*K_X\oplus \bigoplus_{i}\left(1-\frac{1}{m_i}\right)D_i.
   \end{equation}
\end{prop}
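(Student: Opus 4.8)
The plan is to verify the formula by a local computation in a uniformizing chart and then patch the resulting local generators of $\omega_{\mathcal{X}}$ into a global one. Since both sides of the claimed identity are divisorial (Baily) objects and $\mathcal{X}$ is smooth, hence normal, it suffices to compare them in codimension one; in particular the higher-codimension stacky locus (the dissident points) is irrelevant, and I may work near a general point of each ramification divisor $D_i$.

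First I would invoke the local structure of a tame Deligne--Mumford stack with cyclic stabilizers: near a general point of $D_i$ the stack is $[\widetilde{U}/\mu_{m_i}]$ with $\widetilde{U}\simeq\CC^n$, coordinates $(z_1,\dots,z_n)$, and $\mu_{m_i}$ acting by $z_1\mapsto\zeta z_1$, $z_j\mapsto z_j$ for $j\geq 2$, where $\zeta$ is a primitive $m_i$-th root of unity. Then the reduced preimage $\widetilde{D}_i=\{z_1=0\}$ maps onto $D_i$, and in suitable coordinates $(w_1,\dots,w_n)$ on $U\subset X$ the coarse map $\pi$ is $w_1=z_1^{m_i}$, $w_j=z_j$. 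By definition $\omega_{\mathcal{X}}|_{\widetilde{U}}$ is the $\mu_{m_i}$-equivariant sheaf of holomorphic top forms, generated by $dz_1\wedge\cdots\wedge dz_n$, while $\omega_X$ near the image point is generated by $dw_1\wedge\cdots\wedge dw_n$. The computation
\begin{equation*}
\pi^*(dw_1\wedge\cdots\wedge dw_n)=d(z_1^{m_i})\wedge dz_2\wedge\cdots\wedge dz_n=m_i\,z_1^{m_i-1}\,dz_1\wedge\cdots\wedge dz_n
\end{equation*}
shows that, as a rational section of $\omega_{\mathcal{X}}$, a local generator of $\pi^*K_X$ vanishes to order exactly $m_i-1$ along $\widetilde{D}_i$; equivalently $K_{\widetilde{U}}=\pi^*K_U+(m_i-1)\widetilde{D}_i$, which is the ramification (Hurwitz) formula for $\pi|_{\widetilde{U}}$.

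Next I would translate this back to $\mathbb{Q}$-divisors on the orbifold $\mathcal{X}$ itself. Since $\pi^*D_i=\{z_1^{m_i}=0\}=m_i\widetilde{D}_i$ as Cartier divisors on $\widetilde{U}$, the reduced preimage $\widetilde{D}_i$ represents the Baily $\mathbb{Q}$-divisor $\tfrac{1}{m_i}\pi^*D_i$ on $\mathcal{X}$; hence $(m_i-1)\widetilde{D}_i$ represents $\bigl(1-\tfrac{1}{m_i}\bigr)D_i$ in the notation of the statement. Away from $\bigcup_i D_i$ the map $\pi$ is étale, so there $\omega_{\mathcal{X}}=\pi^*\omega_X$ with no correction. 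The two sides of the proposition therefore agree on every uniformizing chart, and since $\mathcal{O}(-)$ of a Baily divisor is determined in codimension one, the local identifications glue to the asserted global equality of line bundles.

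The main obstacle I expect is the bookkeeping in the third paragraph: pinning down the precise local normal form of $\pi$ from the cyclic-stabilizer hypothesis — so that the ramification index genuinely equals the stabilizer order $m_i$ — and being careful with the factor $\tfrac{1}{m_i}$ when passing between a Cartier divisor on the smooth chart $\widetilde{U}$ and the corresponding Baily $\mathbb{Q}$-divisor on $\mathcal{X}$, since it is easy to conflate the reduced preimage $\widetilde{D}_i$ with $\pi^*D_i$. Everything else — the differential-form computation, the étale locus, and the gluing — is routine once the local picture is fixed, and the dissident points play no role because they lie in codimension $\geq 2$.
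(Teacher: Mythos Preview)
Your argument is correct and is precisely the standard Hurwitz--ramification computation for orbifold canonical divisors. Note, however, that the paper does not actually supply its own proof of this proposition: it is stated with a citation to \cite{sasaki} and used as background input, so there is no in-paper argument to compare against. Your local computation in a cyclic uniformizing chart, followed by the identification $(m_i-1)\widetilde{D}_i=(1-\tfrac{1}{m_i})\pi^*D_i$ as Baily $\mathbb{Q}$-divisors, is exactly the argument one finds in the cited reference, and your remark that the comparison need only be made in codimension one (so dissident points are irrelevant) is the right justification for why the chart-by-chart calculation suffices.
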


With this canonical bundle, we can consider the total space of the canonical bundle. The total space of an orbifold line bundle also inherits an orbifold structure from the base. In particular, a fiber inherits an action of the stabilizer of the base. This way, the coarse moduli space of the total space of the canonical bundle can have 3-fold singularities.

Among all orbifolds, orbifold Calabi--Yau 3-folds are the most important to us.
\begin{defn}
    Let $\mathcal{X}$ be an orbifold. If $\omega_{\mathcal{X}}\simeq\mathcal{O}_{\mathcal{X}}$, then $\mathcal{X}$ is called an orbifold Calabi--Yau 3-fold.
\end{defn}

This definition implies that the local model around a point $p$ is $[\CC^n/G]$, $G\subset SL(n,\CC).$

We will use the following example over and over again in this paper.
\begin{ex}
    Let $\mathcal{S}$ be an $n$-dimensional orbifold for an integer $n>0$. Then the total space of the orbifold canonical bundle $\mathcal{X}=$Tot $(\omega_{\mathcal{S}})$ is an orbifold Calabi--Yau $(n+1)$-fold. The orbifold canonical bundle of $\mathcal{X}$ is trivial. The very construction tells us that $\omega_{\mathcal{X}}$ is locally trivial, that is, the fiber is just $\CC$, with no nontrivial action of a finite group coming from the stabilizer group of the base. Then vanishing first Chern class can be seen, just like complex manifolds, which implies $\omega_{\mathcal{X}}\simeq\mathcal{O}_{\mathcal{X}}$.
\end{ex}

\section{Embedding rank 2 surfaces to a smooth CY 3-fold}
Using the machines of the previous section, we now construct a smooth CY 3-fold containing a rank 2 pre-shrinkable surface $S$. Readers can easily use the construction given in this section to produce CY 3-folds containing surfaces $S=S_1\cup_C S_2$ that are not necessarily pre-shrinkable, but satisfy the Calabi--Yau condition. Note that it is also possible to take $g(C)>0.$ We restrict to $g(C)=0$ case as it is the most interesting for 5d SCFTs.
 
 Using 2-dimensional orbifolds, we can embed almost all pre-shrinkable surfaces showing up in \cite{JKKV} into a smooth CY 3-fold. The idea is to take the total space of the canonical bundle $\omega_{\mathcal{S}}$ of an orbifold $\mathcal{S}$. Depending on the orbifold structure, the total space has either codimension 3 or codimension 2 singularities. If the singularities are of codimension 2, often they are of nice form, for instance, transverse $A_n$ singularities. In the case of transverse $A_n$ singularities, we know that such singularities allow a crepant resolution by successively blowing up singular loci. Moreover, such a crepant resolution is unique (See section 4 of \cite{oade}). For transverse $A_1$ case, the exceptional divisor is a ruled surface over $C$. Often, the Calabi--Yau condition determines the isomorphism type of this ruled surface. This is mostly the case when $C\simeq\PP^1.$

\begin{lem}\label{root}
    Let $S$ be a smooth projective surface and $D$ be a smooth divisor with self-intersection $D^2=d$ and arithmetic genus (viewed as a smooth curve) $g$. Then the total space of the canonical bundle of $\sqrt[n]{D/S}$ is an orbifold CY 3-fold $\mathcal{X}$ whose coarse moduli space $X$ contains $S$ as the image of the zero section. The only singularities of $X$ are transverse $A_n$ singularities along $D$ in the zero section.
\end{lem}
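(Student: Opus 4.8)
The plan is to work entirely on the root stack $\mathcal{S}:=\sqrt[n]{D/S}$ and its canonical bundle, then pass to coarse moduli spaces. First I would recall the root stack construction explicitly: $\mathcal{S}$ is covered by two kinds of charts — away from $D$ it agrees with $S$, and near a point of $D$ it has a chart $(\widetilde{U},\mu_n,\varphi)$ where $\widetilde{U}\simeq\CC^2$ with coordinates $(u,v)$, the group $\mu_n$ acts by $\zeta\cdot(u,v)=(\zeta u,v)$, and $\varphi$ sends $(u,v)\mapsto(u^n,v)$ so that $D=\{u^n=0\}$ is the branch divisor with index $n$. Then $\mathcal{S}$ is a smooth DM stack (its coarse space is $S$, which is smooth), and by the Proposition of Toen--Ross--Thomas type quoted above, $\omega_{\mathcal{S}}=\pi^*\omega_S\otimes\mathcal{O}\!\left((1-\tfrac1n)D\right)$ as a Baily divisor, or as an orbifold line bundle it is $\pi^*\omega_S\otimes\mathcal{O}(D)^{1/n}$ where $\mathcal{O}(D)^{1/n}$ is the tautological $n$th root. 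The Example at the end of Section~\ref{dmorbi} immediately gives that $\mathcal{X}:=\mathrm{Tot}(\omega_{\mathcal{S}})$ is an orbifold Calabi--Yau $3$-fold: its own canonical bundle is trivial because the fiber direction carries no stabilizer action. The zero section embeds $\mathcal{S}$ in $\mathcal{X}$, and since the coarse space of $\mathcal{S}$ is $S$, the image of the zero section in the coarse space $X$ of $\mathcal{X}$ is $S$.

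The heart of the argument is the local computation of $X$ near $D$, and this is the step I expect to be the main obstacle — not because it is hard, but because one must keep track of the $\mu_n$-action on the canonical bundle fiber carefully. In the chart $(\widetilde{U},\mu_n)$ above, a local frame for $\omega_S$ is $du_0\wedge dv$ (writing $u_0=u^n$ for the coarse coordinate), and its pullback is $d(u^n)\wedge dv=n u^{n-1}\,du\wedge dv$; the orbifold canonical frame is $\frac{du}{u}\wedge dv$ up to units, equivalently $u^{-1}\cdot(n u^{n-1} du\wedge dv)$, which is $\mu_n$-semi-invariant of weight $-1$. Hence, writing $w$ for the fiber coordinate of $\omega_{\mathcal{S}}$ in this frame, the total space chart is $\CC^3$ with coordinates $(u,v,w)$ and $\mu_n$ acting by $\zeta\cdot(u,v,w)=(\zeta u,\,v,\,\zeta^{-1}w)$. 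This action is in $SL(3)$ (confirming the CY property) and is the standard transverse $A_{n-1}$... one must be careful with indexing here: the quotient $\CC^2/\mu_n$ with weights $(1,-1)$ is the $A_{n-1}$ surface singularity, times the $\CC_v$ factor. I would double-check the statement of the lemma against this — the construction $\sqrt[n]{D/S}$ should produce a transverse $A_{n-1}$ singularity, so either the lemma intends the convention $A_n = \CC^2/\mu_{n+1}$, or "$\sqrt[n]{}$" here is indexed so that the stabilizer is $\mu_{n+1}$; I would state explicitly which convention makes "$A_n$" correct and then the local model $[\,\CC^3_{(u,v,w)}/\mu_{n+1}\,]$ with weights $(1,0,-1)$ gives, on coarse spaces, $(xy=z^{n+1})\times\CC_v$, i.e. a transverse $A_n$ singularity along the $v$-axis, which is $D$.

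Finally I would assemble the global statement. Away from $D$, $\mathcal{S}\to S$ and $\mathcal{X}\to X$ are isomorphisms and $X$ restricts to $\mathrm{Tot}(\omega_S)$, which is smooth; so $\mathrm{Sing}(X)$ is contained in the preimage of $D$ in the zero section. Over a point of $D$ the local analysis above shows $X$ is analytically $(\text{$A_n$ du Val})\times\CC$ with the singular locus being exactly $D\subset S\subset X$ (the zero section, $w=0$, and $u=0$). This also shows the singularities are canonical of the non-isolated du Val type anticipated in the introduction. The one genuinely global input needed is that the orbifold structure of $\mathcal{S}$ along $D$ is uniform — the same cyclic group $\mu_{n+1}$ at every point of $D$, with the normal bundle $N_{D/S}$ of degree $d$ only affecting how the $A_n$ fibers glue along $D$, not the singularity type — and that $D\simeq C$ being a smooth curve of genus $g$ means there is no further stratification. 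I do not need the precise genus or self-intersection $d$ for this lemma; those enter only when, in the applications, one resolves the transverse $A_n$ locus and identifies the exceptional ruled surface over $C$ via the Calabi--Yau condition. The values $d$ and $g$ are recorded in the statement presumably because they are used in the subsequent sections, so I would simply carry them along without using them here.
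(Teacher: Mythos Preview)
Your argument is correct and follows the same approach as the paper's proof, which is simply a terser version of your local chart computation: the induced $\mu_n$-action on the canonical-bundle fiber has weight $-1$, so the coarse total space acquires the claimed transverse du Val singularity along $D$ in the zero section. Your indexing observation is also right---the $n$th root stack yields an $A_{n-1}$ singularity (consistent with the paper's own usage elsewhere, where the square-root stack produces $A_1$ and the $\mu_{r_i}$-quotient in Lemma~\ref{iso} produces $A_{r_i-1}$), so the ``$A_n$'' in the lemma statement is an off-by-one slip rather than a different convention.
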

\begin{proof}
    Since $D$ is smooth, the root stack has a coarse moduli space isomorphic to $S$. This comes from local $\sqrt[n]{D/S}$. Above a stacky point, the fiber has the induced $\mu_n$-action. Because of this $\mu_n$ action, the singularities are localized in the zero section. Using a local calculation, it follows that they form transverse $A_n$ singularities along $D$ inside the zero section.
\end{proof}

\begin{thm}\label{main}
    Let $S=S_1\cup_C S_2$ be a pre-shrinkable surface. If $C\simeq\PP^1$ is the $(-n)$ curve with $n>0$ on $S_2\simeq\mathbb{F}_n$ or $C$ is a ruling of $\FF_0$, then $S$ can be embedded into a smooth non-compact CY 3-fold. 

    In addition, if $C\simeq \PP^1$ is a $(-3)$ curve on $S_2$, then we have $S_1\cup\FF_1,$ and $C$ is in class $e+f$ in $S_1.$
    
    Particularly, 52 pre-shrinkable surfaces of these forms out of all 64 surfaces in Table \ref{sh} and Table \ref{fig:table2} are shrinkable according to this theorem.
\end{thm}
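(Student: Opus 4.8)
The plan is to construct $X$ by applying Lemma~\ref{root} to the component $S_1$, arranged so that the other component $S_2$ reappears, up to isomorphism and compatibly with the gluing curve $C$, as the exceptional divisor of the crepant resolution. First I would record what the Calabi--Yau condition~(\ref{CY}) forces: since $C\simeq\PP^1$ has arithmetic genus $0$, it reads $C^2_{S_1}+C^2_{S_2}=-2$, hence $C^2_{S_1}=-2-C^2_{S_2}$. In the two cases of the statement $C^2_{S_2}\in\{-n,0\}$, so $C^2_{S_1}=n-2$ when $S_2\simeq\FF_n$ with $C$ the $(-n)$-curve, and $C^2_{S_1}=-2$ when $S_2\simeq\FF_0$ with $C$ a ruling. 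In particular $C$ is a smooth rational curve on the smooth projective surface $S_1$, which is exactly the input required by Lemma~\ref{root}.

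Second, I would carry out the construction. Form the square root stack $\mathcal{S}=\sqrt{C/S_1}$, a smooth DM stack with coarse space $S_1$, and let $\mathcal{X}=\mathrm{Tot}(\omega_{\mathcal{S}})$, which is an orbifold Calabi--Yau $3$-fold by the Example of Section~\ref{dmorbi}; write $X_0$ for its coarse moduli space. By Lemma~\ref{root} (with $n=2$), $X_0$ contains $S_1$ as the image of the zero section, is smooth away from $C$, and has along $C$ only the transverse singularity modelled on $\CC^2/\mu_2\times\CC$ (a transverse $A_1$-singularity in the usual convention). Its unique crepant resolution $f\colon X\to X_0$, obtained by blowing up $C$ (see the discussion preceding Lemma~\ref{root}), is a smooth, non-compact Calabi--Yau $3$-fold with $\omega_X\simeq f^*\omega_{X_0}\simeq\mathcal{O}_X$, whose exceptional locus is a single irreducible divisor $E$, a $\PP^1$-bundle over $C\simeq\PP^1$, hence a Hirzebruch surface. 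A short computation in the root-stack chart shows that the strict transform $\widetilde{S_1}\subset X$ is isomorphic to $S_1$ and that the double curve $C_0:=E\cap\widetilde{S_1}$ is a section of $E\to C$ corresponding to $C$; thus $X\supset\widetilde{S_1}\cup_{C_0}E$, and it remains to identify the pair $(E,C_0)$.

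This identification is the heart of the matter, and the step I expect to be the main obstacle. Near $C$ the coarse space $X_0$ is a family over $C$ of cones over plane conics, whose two $\mu_2$-anti-invariant normal directions are a pair of (a priori twisted) line bundles $L_1,L_2$ on $C$: $L_1$ is the root-stack direction normal to $C$ in $\mathcal{S}$, with $L_1^{\otimes 2}\simeq N_{C/S_1}$, and $L_2$ is the fibre of $\omega_{\mathcal{S}}$ along $C$, with $L_2^{\otimes 2}\simeq\bigl(\omega_{S_1}^{\otimes 2}\otimes\mathcal{O}_{S_1}(C)\bigr)\big|_C$. Blowing up $C$ (the vertices of the conic bundle) then yields $E\simeq\PP(L_1\oplus L_2)\simeq\PP\bigl(\mathcal{O}_C\oplus(L_2\otimes L_1^{\vee})\bigr)$, which depends only on the honest line bundle $L_2\otimes L_1^{\vee}$. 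Using adjunction on $C\subset S_1$, i.e.\ $\deg(\omega_{S_1}|_C)=-2-C^2_{S_1}$, one computes $\deg(L_2\otimes L_1^{\vee})=-2-C^2_{S_1}=C^2_{S_2}=:m$, so $E\simeq\FF_{|m|}$, while $C_0$, being the image of $\PP(L_1)$, is a section with $C_0^2=m$. Hence $(E,C_0)\simeq(S_2,C)$: when $m=-n$ we get $\FF_n$ with $C_0$ the $(-n)$-curve, and when $m=0$ we get $\FF_0$ with $C_0$ a ruling. Therefore $X\supset\widetilde{S_1}\cup_{C_0}E\simeq S_1\cup_C S_2=S$, which is the embedding statement. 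The care required is in deriving the ``family of conics'' description of $X_0$ near $C$ rigorously from the orbifold canonical bundle formula and in tracking $L_1,L_2$ (genuinely twisted when $m$ is odd) through the blow-up; after that the intersection-theoretic bookkeeping, including the compatibility with~(\ref{CY}), is routine.

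Finally, for the two remaining assertions. If $C$ is a $(-3)$-curve on $S_2$ then $m=-3$ and $C^2_{S_1}=1$; requiring that $C$ complete $S$ to a rank $2$ pre-shrinkable surface then forces, by inspection of the classification in Section~\ref{classification}, $S_1\simeq\FF_1$ with $[C]=e+f$, which is the ``in addition'' claim, and the construction above applies with $S_2\simeq\FF_3$. The count ``$52$ out of $64$'' is then a direct inspection of Tables~\ref{sh} and~\ref{fig:table2}: one lists which of the tabulated rank $2$ pre-shrinkable surfaces have a component $S_2\simeq\FF_n$ ($n>0$) glued along its $(-n)$-curve or a component $S_2\simeq\FF_0$ glued along a ruling; the ones not of this shape (with $S_2\simeq\PP^2$ and $C$ a line, or $S_2\simeq\FF_1$ with $C^2=1$) are exactly those treated by the remaining constructions in the paper.
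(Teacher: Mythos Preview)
Your construction is exactly the paper's: take $\mathcal{S}=\sqrt{C/S_1}$, form $\mathrm{Tot}(\omega_{\mathcal{S}})$, pass to the coarse space with its transverse $A_1$ locus along $C$ (Lemma~\ref{root}), and blow it up once. Where the paper merely asserts that ``the isomorphism type of $S_2$ is determined by the Calabi--Yau condition on $C$,'' you supply the explicit identification $E\simeq\PP(L_1\oplus L_2)$ with $\deg(L_2\otimes L_1^{\vee})=-2-C^2_{S_1}=C^2_{S_2}$, hence $(E,C_0)\simeq(\FF_{|m|},\text{section of self-intersection }m)$; this is a genuine gain in precision over the paper's argument and makes transparent why the cases $C^2_{S_2}\le 0$ are unambiguous (cf.\ Remark~\ref{cau}(1)).

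Your handling of the ``in addition'' clause, however, contains an error. You interpret it as saying that when $C$ is the $(-3)$-curve on $S_2\simeq\FF_3$ the pre-shrinkable classification forces $S_1\simeq\FF_1$; this is false, since the tables list $\PP^2\cup\FF_3$ and $\FF_3\cup dP_k$ with $S_2\simeq\FF_3$, $C=e$, and $S_1\not\simeq\FF_1$. Comparing with the statement in the Introduction, the clause is meant to extend the construction to the case $S_2\simeq\FF_1$ with $C$ a $(+1)$-section (so $C^2_{S_1}=-3$): then your own formula gives $m=1$, whence $E\simeq\FF_1$ with $C_0$ in class $e+f$, and this is exactly what embeds the table entries of the form $\text{Bl}_k\FF_3\cup\FF_1$ with $e\sim e+f$. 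The paper's one-line proof of this part records only that the exceptional divisor is a ruled surface carrying $C$ as a section of self-intersection~$1$; your computation already proves it once the indices are read correctly.
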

\begin{proof}
    Apply the square-root stack construction for $C\subset S_1$ and set $\mathcal{S}=\sqrt{C/S_1}.$ The coarse moduli space of the total space of the canonical bundle of $\mathcal{S}$ is a singular CY 3-fold that has transverse $A_1$ singularities along $C$ in the zero section. By blowing up this singular locus, we get the exceptional divisor $S_2$ ruled over $C$. The isomorphism type of $S_2$ is determined by the Calabi--Yau condition on $C$. Since we still have $S_1$, we get $S_1\cup_C S_2$ in the resolved 3-fold. Observe that as this is a small resolution, it is creapnt, and thus the resolved 3-fold is still Calabi--Yau.

    The case for $C$ being a $(-3)$ curve in $S_2$ follows from the fact that $C$ must be a section of a ruled surface $S_1$ over $\PP^1$ with self-intersection 1 in $S_1.$
\end{proof}

In particular, if we have transverse $A_1$ singularities, a single blowup along the singular locus gives a smooth CY 3-fold and the exceptional divisor is a $\PP^1$-bundle over the singular locus. This construction, when applied to $\PP^2$ along a smooth conic $C$, gives $\PP^2\cup\FF_6$. Similarly, we can construct $\FF_0\cup\FF_4$ where the double curve is $f_1+f_2$ in $\FF_0$, the sum of two rulings of $\FF_0$. Note that these examples are not toric as the double curve $C$ is not a torus-invariant divisor in at least one component. Most surfaces in Table~\ref{sh} (except $dP_2\cup dP_2$ and Bl$_3\FF_3\cup\PP^2$) are embeddable using this theorem.

\begin{rmk}\label{cau}\begin{enumerate}
    \item 
    In the above theorem, if $(C^2)_{S_1}<-3$, then there is an ambiguity in determining the exceptional divisor using only the self-intersection of $C$. However, exceptional divisors of the transverse $A_1$ singularities can still be computed  in many cases. However, they do not necessarily give pre-shrinkable surfaces. For instance, when we apply our construction to $\sqrt{e/\FF_5}$, we get a CY 3-fold containing $\FF_5\cup\FF_3$, instead of pre-shrinkable $\FF_5\cup\FF_1$. 

\item In Section 3 of \cite{JKKV}, the Hanany-Witten (HW) transition was discussed. This is another way to get another physically equivalent surface from the other. The physics of local $\FF_2$ and local $\FF_0$ are equivalent up to decoupled free sectors (mathematically, local GW invariant is not well-defined for $(-2)$ curve). This can be understood as meaning that after throwing some curve classes away, the data in their local GW theory are the same. HW transition can be performed in higher rank cases, giving physical equivalence between $\FF_2\cup\FF_4$ and $\FF_0\cup\FF_4$ where the double curve is the $(-4)$ curve on $\FF_4$ for both surfaces.

In particular, this gives an embedding of surfaces Bl$_6\FF_4\cup\FF_2$ (we identify $e$ on Bl$_6\FF_4$ and a (+2) curve on $\FF_2$) when we apply the square-root stack construction to $\sqrt{e/\text{Bl}_6\FF_4}$. This geometry is physically equivalent to Bl$_6\FF_4\cup\FF_0$. Note that in this case, both surfaces are pre-shrinkable.

    \item It is possible to generalize Theorem~\ref{main} to obtain higher rank surfaces by considering $n$th-root stacks instead of square-root stacks. However, for transverse $A_n$ singularities with $n>1$, we also need to understand the monodromy to fully understand the geometry of the exceptional divisors (See Sections 3 and 4 of \cite{oade}). We leave studying this geometry for future work.
\end{enumerate}
\end{rmk}

To state the next theorem, we use the multiplicative notation for the cyclic group $\mu_n$ of order $n$ and will think of them as a subgroup of $\CC^*$ embedded as the group of $n$th roots of unity.

\begin{lem}\label{iso}
    Let $S$ be a normal projective surface with cyclic quotient singularities at finitely many points $x_1,\dots,x_n$. Let $\frac{1}{r_i}(1,q_i)$ be its singularity type at $x_i$ with $q_i$ being coprime to $\leq r_i$. That is, the singularity is isolated and locally given by the quotient
    \begin{equation}
        \zeta\cdot(z_1,z_2)=(\zeta z_1,\zeta^{q_i}z_2)\text{ where }\zeta=e^{\frac{2\pi i}{r_i}}. 
    \end{equation}Then the total space of the canonical bundle of $\mathcal{S}=S^{\mathrm{can}}$ has the coarse moduli space $X$ which has
    \begin{enumerate}
        \item a 3-fold cyclic quotient singularity at $x_i$ inside the zero section if $1+q_i\neq r_i$
        \item transverse $A_{r_i-1}$ singularities if $1+q_i=r_i$ along the fiber of $x_i$. 
    \end{enumerate}
    In either case, successive blow-ups of singular loci resolve the singularities. The resolved 3-fold then contains the minimal resolution $\widetilde{S}$ of $S$ together with the compact exceptional divisors from 3-fold cyclic quotient singularities of type (1) and some non-compact divisors either from type (1) or (2).
\end{lem}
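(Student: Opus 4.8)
The plan is to localize the problem at each singular point $x_i$, reduce it to a concrete cyclic quotient computation, and then resolve by standard toric means. Since $\mathcal S=S^{\mathrm{can}}$ is a smooth DM stack whose stabilizers are supported exactly over $\{x_1,\dots,x_n\}$, and $\omega_{\mathcal S}$ restricts to the ordinary line bundle $\omega_S$ over the smooth locus $S\setminus\{x_1,\dots,x_n\}$, the coarse space $X$ of $\mathcal X=\mathrm{Tot}(\omega_{\mathcal S})$ is smooth away from the fibers over the $x_i$. So it suffices to work in a neighborhood of each $x_i$, where $\mathcal S\cong[\CC^2/\mu_{r_i}]$ with $\zeta\cdot(z_1,z_2)=(\zeta z_1,\zeta^{q_i}z_2)$. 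On the $\CC^2$-chart the canonical bundle is trivialized by $dz_1\wedge dz_2$, and since $\mathcal X$ is an orbifold Calabi--Yau $3$-fold the induced $\mu_{r_i}$-action on its total space lies in $SL(3,\CC)$; this forces the weight on the fiber coordinate $w$ to be $-(1+q_i)\bmod r_i$, so that the three weights sum to $0$. Hence, near $x_i$,
\begin{equation*}
  X\;\cong\;\CC^3/\mu_{r_i}\quad\text{with $\mu_{r_i}$ acting with weights }(1,\,q_i,\,r_i-1-q_i)\text{ on }(z_1,z_2,w),
\end{equation*}
using the representative $r_i-1-q_i\in\{0,\dots,r_i-1\}$ of $-(1+q_i)$, which is legitimate since $1\le q_i\le r_i-1$ (so $2\le 1+q_i\le r_i$).

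Now I would split according to whether the weight $r_i-1-q_i$ on the fiber vanishes. If $1+q_i=r_i$, it is $0$: the action is trivial in the fiber direction, so $X\cong(\CC^2_{z_1,z_2}/\mu_{r_i})\times\CC_w$ with weights $(1,-1)$ on the $\CC^2$-factor, i.e. the $A_{r_i-1}$ du Val surface singularity, swept out along the fiber $\{z_1=z_2=0\}$ over $x_i$; this is case (2). If $1+q_i\ne r_i$, all three weights are nonzero modulo $r_i$ and $X$ has an honest $3$-fold cyclic quotient singularity at the origin, the point $x_i$ in the zero section; this is case (1). In case (1) I would also record where else $X$ is singular near $x_i$: the $z_1$- and $z_2$-axes carry a free $\mu_{r_i}$-action (as $\gcd(1,r_i)=\gcd(q_i,r_i)=1$), while the $w$-axis---the fiber over $x_i$, with coarse space $\cong\CC$---carries the subgroup $\mu_{d_i}$, $d_i=\gcd(1+q_i,r_i)$, and because $d_i\mid 1+q_i$ the transverse weights $(1,q_i)\bmod d_i$ equal $(1,-1)\bmod d_i$, so this is a transverse $A_{d_i-1}$ singularity---as it must be, $X$ being du Val in codimension $2$.

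Finally, all these local models are cyclic quotient, hence toric, singularities, so iterating the blow-up of the reduced singular locus produces a resolution $f:\widetilde X\to X$---each such blow-up is a star subdivision, and the process terminates---and these resolutions are disjoint over the different $x_i$. To read off what sits in $\widetilde X$ I would pass to toric fans: in the local model $\CC^3/\mu_{r_i}$ the zero section is the toric divisor $D_{e_3}=\{w=0\}$, so its strict transform $\widetilde S$ is the orbit closure $V(e_3)$ in the smooth toric variety $\widetilde X$, hence a smooth toric surface whose fan is the star of $e_3$ in the subdivided fan---a smooth subdivision of the cone of $\tfrac1{r_i}(1,q_i)$; choosing the resolution of $X$ so that it does not over-subdivide in the $e_3$-direction, $\widetilde S\to S$ restricts over $x_i$ to the Hirzebruch--Jung resolution of $\tfrac1{r_i}(1,q_i)$, which is minimal since every exceptional curve in a Hirzebruch--Jung string has self-intersection $\le-2$. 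The remaining exceptional divisors come from the new rays: those interior to the $3$-dimensional cone give the compact exceptional divisors (present only in case (1); e.g.\ $\PP^2$ for $\tfrac13(1,1,1)$), and the new rays forced onto the facet spanned by $D_{z_1}$ and $D_{z_2}$---the transverse $A$-resolution along the fiber, which happens in case (2) and, in case (1), exactly when $d_i>1$---give the non-compact exceptional divisors, each a $\PP^1$-bundle over the fiber $\cong\CC$. Gluing the local descriptions over all the $x_i$ then yields the statement.

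The main obstacle is this last toric bookkeeping: writing down the fan of $\mathrm{Tot}(\omega_{\mathcal S})$ near each $x_i$ and of its iterated star-subdivision, separating the new rays into interior ones (compact divisors) and facet ones (non-compact divisors), and---most delicately---checking that the resolution induced on the zero section is the \emph{minimal} resolution of $S$ rather than a further blow-up of it.
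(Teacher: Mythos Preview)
Your argument follows the same strategy as the paper's proof---localize at each $x_i$, compute the induced $\mu_{r_i}$-weight on the fiber of $\omega_{\mathcal S}$, and split on whether that weight vanishes---so the core approach is identical. Your write-up is considerably more detailed than the paper's, which is only a few lines long and does not spell out the toric resolution step at all.

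One point worth flagging: in case~(1) you correctly observe that when $d_i=\gcd(1+q_i,r_i)>1$ the quotient is \emph{not} an isolated singularity, but carries a transverse $A_{d_i-1}$ singularity along the fiber over $x_i$. The paper's proof asserts that in case~(1) ``the singularity is localized in the zero section,'' which as stated is imprecise for exactly this reason; your analysis is the accurate one, and it is in fact what is needed to justify the lemma's final clause about non-compact exceptional divisors arising ``either from type~(1) or~(2).'' Your caveat at the end---that the iterated blow-up of the singular locus must be checked to induce the \emph{minimal} resolution on the zero section rather than some further blow-up---is a genuine bookkeeping point the paper does not address, but it is routine once one writes down the fan.
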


\begin{proof}
    Since the singularities on $S$ are isolated, we can work locally. The differential forms have induced action from the base, which makes the total space a CY 3-fold. If $1+q_i\neq r_i$ then there is a nontrivial action of $\mu_{r_i}$ on the fiber of the canonical bundle over $x_i$. Thus, the singularity is localized in the zero section.

    If $1+q_i=r_i$, the surface $S$ has the surface $A_{r_i-1}$ singularity at $x_i$. Then the fiber of the canonical bundle at $x_i$ has only trivial action, so we have the same singularity along the entire fiber, which gives transverse $A_{r_i-1}$ singularities along the fiber.
\end{proof}

We will see in the next section examples of these surfaces. The following is a simple consequence of the above lemma for pre-shrinkable surfaces which is in fact true for all surfaces satisfying the Calabi--Yau condition. In particular, this example tells us how to embed surfaces like Bl$_8\FF_3\cup \PP^2$ in a CY 3-fold.

\begin{thm}\label{mainthm2}
    Let $S=S_1\cup_CS_2$ be a surface satisfying the Calabi--Yau condition. Suppose $S_2\simeq \PP^2$ and $C$ is a line on $S_2$. Then $S$ can be embedded into a smooth, non-compact CY 3-fold.
\end{thm}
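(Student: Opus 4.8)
The plan is to produce the $3$-fold from Lemma~\ref{iso} after first \emph{contracting} $C$ inside $S_1$, arranged so that the compact exceptional divisor it produces is exactly $\PP^2$. First note that \eqref{CY}, together with $g(C)=0$ and the fact that a line in $\PP^2$ has self-intersection $1$, forces $(C^2)_{S_1}=-3$; thus $C$ is a smooth $(-3)$-curve in $S_1$. The construction of Theorem~\ref{main} cannot be applied verbatim here, since the exceptional divisor of a transverse $A_1$ singularity along $C\simeq\PP^1$ is a $\PP^1$-bundle over $C$, whereas $\PP^2$ is not ruled over $\PP^1$. Instead the idea is to create an \emph{isolated} $3$-fold quotient singularity of type $\frac13(1,1,1)$, whose unique crepant resolution has exceptional divisor $\PP^2$.

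Concretely, I would proceed as follows. \textbf{(1)} Contract the smooth rational $(-3)$-curve $C$ inside $S_1$; by the contractibility criterion for a negative curve on a projective surface this yields a normal projective surface $\bar S_1$ with a single cyclic quotient singularity at a point $p$, and since the Hirzebruch--Jung string of a single $(-3)$-curve is $[3]$, the singularity is of type $\frac13(1,1)$, with $S_1\to\bar S_1$ the minimal resolution and $C$ its exceptional curve. \textbf{(2)} Apply Lemma~\ref{iso} to $\bar S_1$ with $\mathcal S=\bar S_1^{\mathrm{can}}$: here $r=3$, $q=1$ and $1+q=2\neq 3=r$, so we are in case~(1) and the coarse moduli space $X$ of $\mathrm{Tot}(\omega_{\mathcal S})$ has exactly one $3$-fold cyclic quotient singularity, in the zero section over $p$. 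To see it is $\frac13(1,1,1)$: on the uniformizing chart $[\CC^2/\mu_3]$ at the stacky point, $\omega_{\mathcal S}$ is generated by $dz_1\wedge dz_2$, on which $\mu_3$ acts with weight $1+q$, so the fibre coordinate carries weight $-(1+q)\equiv 1\pmod 3$, giving local model $[\CC^3/\mu_3]$ with weights $(1,1,1)$; this is compatible with $\mathrm{Tot}(\omega_{\mathcal S})$ being an orbifold Calabi--Yau $3$-fold.

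\textbf{(3)} Blow up $p$ in $X$. Near $p$, $X$ is the affine cone over the Veronese $v_3(\PP^2)$, so $\mathrm{Bl}_pX$ is $\mathrm{Tot}(\mathcal O_{\PP^2}(-3))$, which is smooth, with exceptional divisor $E\cong\PP^2$ of normal bundle $\mathcal O_{\PP^2}(-3)$; since $\omega_{\mathrm{Tot}(\mathcal O_{\PP^2}(-3))}=\pi^*(\omega_{\PP^2}\otimes\mathcal O_{\PP^2}(3))=\mathcal O$, this blow-up is crepant, and hence $\widetilde X:=\mathrm{Bl}_pX$ is a smooth, non-compact Calabi--Yau $3$-fold. \textbf{(4)} Identify $S$ in $\widetilde X$. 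The zero section of $\mathrm{Tot}(\omega_{\mathcal S})$ is $\bar S_1$; its strict transform under the blow-up is, locally near $p$, the strict transform of $\{w=0\}=\CC^2/\mu_3$ in $\mathrm{Bl}_p(\CC^3/\mu_3)=\mathrm{Tot}(\mathcal O_{\PP^2}(-3))$, which is $\mathrm{Tot}(\mathcal O_{\PP^1}(-3))$ lying over a line $L\subset\PP^2$ --- precisely the minimal resolution of $\frac13(1,1)$. Hence the strict transform of $\bar S_1$ is $S_1$, it meets $E\cong\PP^2$ exactly along the line $L$, which is the image of $C$, and therefore $\widetilde X\supset S_1\cup_C\PP^2=S$.

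I expect step~(4) to be the crux: confirming that the strict transform of the zero section under the blow-up of $p$ is exactly $S_1$ and that it is glued to the new $\PP^2$ along a line. This is a local computation in the blow-up of $\frac13(1,1,1)$, but it is precisely what fixes both the isomorphism type of the second component and the class $[C]$ of the double curve. A minor additional point is to record that the contraction $\bar S_1$ is projective, which is standard for a single negative curve on a projective surface. Finally, the same mechanism --- a surface cyclic quotient singularity producing, via $\mathrm{Tot}(\omega_{(-)^{\mathrm{can}}})$, a $3$-fold quotient singularity with a single crepant exceptional divisor --- should yield further admissible second components $S_2$ beyond $\PP^2$, which is one possible line of attack on Problem~\ref{embedding}.
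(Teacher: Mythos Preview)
Your proposal is correct and follows essentially the same route as the paper: contract the $(-3)$-curve $C\subset S_1$ to a $\frac13(1,1)$ point, pass to the canonical stack, take the total space of its canonical bundle to produce an isolated $\frac13(1,1,1)$ singularity, and crepantly resolve by a single blow-up with exceptional divisor $\PP^2$. You supply more local detail than the paper (the fibre weight computation, the explicit identification of the strict transform with $S_1$ meeting $E$ along a line), but the strategy is identical.
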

\begin{proof}
    Note that by the CY condition, $(C\cdot C)_{S_1}=-3$. Let $S\to S'$ be a contraction of this $(-3)$-curve. Then $S'$ has $\frac{1}{3}(1,1)$ singularity. Since $S'$ has only a quotient singularity, there exists a canonical DM stack $(S')^\text{can}$. The coarse moduli space of the total space of the canonical bundle, $\text{Tot}(\omega_{(S')^\text{can}})$, then has an isolated singularity of type $\frac{1}{3}(1,1,1)$. This quotient singularity can be resolved with the exceptional divisor $\PP^2$. Since the resolution is just done by a single blowup of the singular point, on $S'$, it is restricted to a single blowup of the singular point, which gives back a smooth rational $(-3)$ curve. That is, this blow up restricts to $S\to S'$. By the CY condition, this must be identified with a line in $\PP^2.$
\end{proof}

Ideally, one might hope to combine the above two constructions. For a smooth DM stack $\mathcal{X}$, we have the following factorization
\begin{equation}
    \mathcal{X}\simeq (\sqrt{\mathcal{D}/X^{\mathrm{can}}})^{\mathrm{can}}\to \sqrt{\mathcal{D}/X^{\mathrm{can}}} \to X^{\mathrm{can}}\to X
\end{equation}
where $\mathcal{D}$ is the associated Cartier divisor of $D$ in $X^{\mathrm{can}}.$ As a result, the triple $(X,D_i,e_i)$ determines $\mathcal{X}$. However, as it is observed in \cite{bottom}, not every triple determines a smooth stack. In particular, they gave an example where $X\simeq\mathbb{A}^3$ and $D=V(xy+z^2)$, which is singular, then $\sqrt{D/X}$ has a hypersurface (conifold) singularity that is not a quotient singularity. Then the canonical stack does not exist. Therefore we can only combine the two constructions when they arise as a factorization of a smooth DM stack. In general, the resulting singular 3-fold has generically transverse $A_n$ singularities with a finite number of dissident points. See Example~\ref{z7}.

\section{Application: Shrinkability of pre-shrinkable surfaces}\label{classification}
Using the techniques discussed so far and a toric construction for $dP_2\cup dP_2$ below, we can embed 62 pre-shrinkable surfaces (not physically equivalent to each other) in a shrinkable CY 3-fold. In particular, we can embed non-toric surfaces like $\PP^2\cup\FF_6$ (apply Theorem~\ref{root} to a smooth conic in $\PP^2)$ and Bl$_5\FF_3\cup\PP^2$ for which we identify $e$ and $\ell$ (apply Theorem~\ref{iso} to Bl$_5\FF_3$ with the $(-3)$ curve contracted to a $\frac{1}{3}(1,1)$ singularity). 

We list all 64 pre-shrinkable surfaces in the following tables. Given a surface, we have chosen a surface that is physically equivalent to it and listed it in the first column. In case readers want to compare with \cite{JKKV}, the representative in the third column is the one in Figure 16 of \cite{JKKV}. For two pre-shrinkable surfaces, Bl$_9\FF_6\cup\PP^2$ and Bl$_{10}\FF_6\cup\PP^2$, the author does not yet know how to directly embed them. Instead, we can embed Bl$_9\FF_6\cup\FF_4$ and Bl$_8\FF_5\cup\FF_3$. These are, after HW transition and flops, physically equivalent to Bl$_9\FF_6\cup\PP^2$ and Bl$_{10}\FF_6\cup\PP^2$ respectively. However, Bl$_9\FF_6\cup\FF_4$ and Bl$_8\FF_5\cup\FF_3$ are not pre-shrinkable. It would be an interesting problem to find a way to construct more CY 3-folds covering these two cases.

To give more explanation for the contents of the tables, all surfaces in Table~\ref{fig:table1} are shrinkable. In Table~\ref{fig:table2}, in the last column, we indicate surfaces for which our results do not yield an embedding. For shrinkable surfaces, the last column is blank. In the second column, we record the class of the double curve in each component.

For $dP_2\cup dP_2$, again we can flop a rational curve to a non-pre-shrinkable surface, but indeed it is well-known that we can directly embed it using toric geometry. The following is a section (at the $z=1$ plane, in $xyz$-space) of a toric fan of a CY 3-fold containing $dP_2\cup dP_2$. The two interior vertices correspond to $dP_2$.

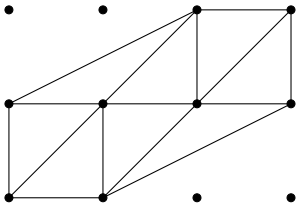
\begin{figure}[hbtp!]
\centering
\begin{tikzpicture}

\foreach \x in {0,1,2,3}
   \foreach \y in {0,1,2} 
      \draw[fill] (5/4*\x,5/4*\y) circle (1.5pt) coordinate (m-\x-\y);

\draw (m-0-0) -- (m-0-1) -- (m-2-2)  -- (m-3-2) -- (m-3-1) -- (m-1-0) -- cycle;
\draw (m-0-0) -- (m-1-1);
\draw (m-1-1) -- (m-0-1);
\draw (m-1-1) -- (m-1-0);
\draw (m-1-1) -- (m-2-1);
\draw (m-1-1) -- (m-2-2);
\draw (m-2-1) -- (m-2-2);
\draw (m-2-1) -- (m-3-1);
\draw (m-2-1) -- (m-3-2);
\draw (m-2-1) -- (m-1-0);

\end{tikzpicture}
\caption{Section of a toric fan of a Calabi--Yau 3-fold containing $dP_2\cup dP_2$}
\label{fig:blp2}
\end{figure}

\begin{table}
\begin{tabular}{ |p{3cm}||p{4cm}|p{4cm}|  }
 \hline
 \multicolumn{3}{|c|}{Rank 2 surfaces(31)} \\
 \hline
 Surface & double curve &Comments or representative in \cite{JKKV}\\
 \hline
 $\PP^2\cup \FF_3$   & $\ell\sim e$    &   \\
 $\PP^2\cup \FF_6$   & $2\ell\sim e$  & \\
 $\FF_2\cup \FF_1$ &$e\sim f$ &  \\
 $\FF_1\cup\FF_1$    &$e\sim e$ &   \\
 $\FF_2\cup\FF_0$&   $e\sim f_1$  & \\
 $\FF_3\cup\FF_1$& $e\sim e+f$   &\\
 $\FF_4\cup\FF_0$& $e\sim f_1+f_2$   &\\
 $\FF_5\cup\FF_1$ &$e\sim e+2f$ & \\
 $\FF_6\cup\FF_0$ &$e\sim f_1+2f_2$  &\\
 $\FF_7\cup\FF_1$& $e\sim e+3f$ & \\
 $\FF_8\cup\FF_0$ &$e\sim f_1+3f_2$  &\\
 $\FF_6\cup\FF_1$ &$e\sim 2e+2f$ & \\
 $\FF_1\cup dP_2$ & $e\sim \ell-x_1-x_2$  &\\
 $\FF_1\cup dP_2$ & $e\sim x_1$ &  \\
 $\FF_2\cup dP_2$ & $e\sim \ell-x_1$ & \\
 $\FF_3\cup dP_2$&$e\sim \ell$&\\
 $\FF_4\cup dP_2$ &$e\sim 2\ell-x_1-x_2$ & \\
 $\FF_5\cup dP_2$ &$e\sim 2\ell-x_1$ &\\
 $\FF_6\cup dP_2$ &$e\sim 3\ell-2x_1-x_2$ &\\
 $\FF_7\cup dP_2$ &$e\sim 3\ell-2x_1$ &\\
 $\FF_6\cup dP_2$ & $e\sim 2\ell$ &\\

$dP_2\cup dP_2$
& $\ell\sim e$ & via toric diagram \\
 Bl$_3\FF_3\cup\PP^2$ & $e\sim \ell$& Bl$_1\FF_1\cup_{\ell-x_1-x_2} dP_2$\\
 Bl$_2\FF_2\cup \FF_0$ &$e\sim f_1$ &  Bl$_1\FF_1\cup_{x_1} dP_2$ \\
 $\FF_1\cup dP_3$ &$e\sim \ell-x_1-x_2$  & \\
 $\FF_2\cup dP_3$ &$e\sim \ell-x_1$&\\
 $\FF_3\cup dP_3$ &$e\sim 2\ell-x_1-x_2-x_3$ &\\
 $\FF_4\cup dP_3$ &$e\sim 2\ell-x_1-x_2$  &\\
 $\FF_5\cup dP_3$ &$e\sim 2\ell-x_1$ & \\
 $\FF_6\cup dP_3$ &$e\sim3\ell-2x_1-x_2$&\\
  $\FF_6\cup dP_3$ &$e\sim2\ell$&\\
 \hline
\end{tabular}
 \caption{31 pre-shrinkable surfaces, all of them are shrinkable}
    \label{fig:table1}
\label{sh}
\end{table}

    \begin{table}
\begin{tabular}{ |p{2cm}||p{4.5cm}|p{3cm}|p{4cm}|  }
\hline
 \multicolumn{4}{|c|}{Rank 2 surfaces(33)} \\
 \hline
 Surface & double curve &Comments or representative in \cite{JKKV}&Shrinkablility\\
 \hline
 Bl$_4\FF_3\cup\PP^2$   & $e\sim\ell$    &Bl$_2\FF_1\cup dP_2$&   \\
 Bl$_3\FF_2\cup \FF_0$   & $e\sim f_1$  &Bl$_1\FF_1\cup dP_3$ 
 
 &\\
 $\FF_1\cup dP_4$ &$e\sim \ell-x_1-x_2$ & &  \\
 $\FF_2\cup dP_4$    &$e\sim 2\ell-x_1-x_2-x_3-x_4$ & &  \\
 $\FF_3\cup dP4$&   $e\sim 2\ell-x_1-x_2-x_3$  & &\\
 $\FF_4\cup dP_4$& $e\sim 2\ell-x_1-x_2$  &    &\\
 $\FF_5\cup dP_4$& $e\sim 2\ell-x_1$  & &\\
 Bl$_5\FF_3\cup \PP^2$ &$e\sim \ell$ &Bl$_3\FF_1\cup dP_2$ & \\
 Bl$_4\FF_3\cup \FF_1$ &$e\sim e+f$ & Bl$_2\FF_1\cup dP_3$&\\
 Bl$_4\FF_2\cup\FF_0$& $e\sim f_1$ &Bl$_1\FF_1\cup dP_4$ 
 &\\
 $\FF_1\cup dP_5$ &$e\sim 2\ell-x_1-x_2-x_3-x_4-x_5$ & &\\
 $\FF_2\cup dP_5$ &$e\sim 2\ell-x_1-x_2-x_3-x_4$ & &\\
 $\FF_3\cup dP_5$ & $e\sim \ell$ & &\\
 $\FF_4\cup dP_5$ & $e\sim 2\ell-x_1-x_2$ & & \\
 Bl$_6\FF_3\cup\PP^2$ & $e\sim \ell$ &Bl$_4\FF_1\cup dP_2$ &\\
Bl$_5\FF_3\cup \FF_1$&$e\sim e+f$&Bl$_2\FF_1\cup dP_4$& \\
 Bl$_6\FF_6\cup\PP^2$ &$e\sim 2\ell$ &Bl$_1\FF_1\cup dP_5$ &\\
 $\FF_1\cup dP_6$ &$e\sim 2\ell-x_1-x_2-x_3-x_4-x_5$ &&\\
 $\FF_2\cup dP_6$ &$e\sim \ell-x_1$ &&\\
 $\FF_3\cup dP_6$ &$e\sim \ell$ &&\\
 Bl$_7\FF_3\cup\PP^2$ & $e\sim \ell$ &Bl$_5\FF_1\cup dP_2$&\\
 Bl$_6\FF_4\cup\FF_2$ & $e\sim f_1+f_2$ & Bl$_3\FF_1\cup dP_4$&up to HW transition\\
Bl$_6\FF_3\cup\FF_1$ & $e\sim e+f$& Bl$_2\FF_1\cup dP_5$&\\
 Bl$_6\FF_2\cup\FF_0$ &$e\sim f_1$ & Bl$_1\FF_1\cup dP_6$
 &\\
 $\FF_1\cup dP_7$ &$e\sim x_1$ & & \\
 Bl$_8\FF_3\cup\PP^2$ &$e\sim \ell$&&\\
Bl$_7\FF_4\cup \FF_2$ &$e\sim f_1+f_2$ &Bl$_3\FF_1\cup dP_5$
& up to HW transition\\
Bl$_7\FF_3\cup \FF_1$  &$e\sim e+f$ &Bl$_2\FF_1\cup dP_6$
&\\
Bl$_7\FF_2\cup\FF_0$ &$e\sim f_1$ & &\\
Bl$_9\FF_6\cup\PP^2$ &$e\sim 2\ell$&Bl$_4\FF_1\cup dP_5$& not known yet\\
 Bl$_8\FF_4\cup \FF_2$  &$e\sim f_1+f_2$&Bl$_3\FF_1\cup dP_6$& up to HW transition\\
Bl$_8\FF_3\cup \FF_1$& $e\sim e+f$& &\\
Bl$_{10}\FF_6\cup \PP^2$&$e\sim 2\ell$&Bl$_4\FF_1\cup dP_6$
&not known yet\\
 \hline
\end{tabular}
    \caption{33 pre-shinkable surfaces}
    \label{fig:table2}
\end{table}

\section{Application: Local Gromov--Witten theory of singular surfaces}

The construction of a smooth CY 3-fold containing a singular surface $S$ allows us to define local Gromov--Witten invariants for $S$ with an assumption on the moduli space of stable maps. This is the subject of the paper \cite{preprint}. There we worked with an assumption that the surface is embedded in a smooth 3-fold as a hypersurface. Lemma 2.1 in \cite{preprint} is to ensure that these invariants can be defined without assuming that the ambient 3-fold is Calabi--Yau. Now that we have constructed a CY 3-fold containing $S$, the invariants can be defined for them much more easily.

\begin{prop}
    Let $S$ be a shrinkable surface and $0\neq\beta\in H_2(S,\ZZ)$ be a curve class and $X$ be a smooth CY 3-fold containing $S$ with $i:S\hookrightarrow X$. If
    \begin{equation}\label{iden}
        \overline{\mathcal{M}}_{g}(S,\beta)\subset \overline{\mathcal{M}}_g(X,i_*\beta)
    \end{equation}
    is a union of connected components, then the contribution of $S$ to the local GW invariants of $X$ can be defined as in \cite{preprint}, Theorem 3.9.
\end{prop}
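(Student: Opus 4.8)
The plan is to reduce the claimed definability to the construction already established in \cite{preprint}, Theorem 3.9, by checking that the hypotheses of that theorem are met in our situation. First I would recall the setup of \cite{preprint}: given an embedding $i:S\hookrightarrow X$ of a (possibly singular, snc) surface as a hypersurface in a smooth projective — or here, smooth quasiprojective — threefold, together with a curve class $\beta$ whose moduli space of stable maps $\overline{\mathcal{M}}_g(S,\beta)$ sits inside $\overline{\mathcal{M}}_g(X,i_*\beta)$ as a union of connected components, one extracts the contribution of $S$ by integrating the restriction of the virtual class of $\overline{\mathcal{M}}_g(X,i_*\beta)$ (equivalently, by a virtual localization / excess intersection argument) over that open-and-closed substack. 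The point of Lemma 2.1 of \cite{preprint} was precisely to make this work without assuming $X$ Calabi--Yau; since by Theorem~\ref{main} and Theorem~\ref{mainthm2} (and Lemma~\ref{root}, Lemma~\ref{iso}) we now have a genuinely smooth non-compact CY threefold $X$ containing $S$, all of those hypotheses are in force, and the only thing that requires checking is the hypothesis~(\ref{iden}) itself, which is assumed.

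The key steps, in order, are: (1) observe that $X$ is smooth and quasiprojective (it is the crepant resolution of the coarse moduli space of $\mathrm{Tot}(\omega_{\mathcal S})$ for the appropriate root or canonical stack $\mathcal S$, and crepant resolutions of Gorenstein canonical threefolds of the transverse $A_n$ / cyclic-quotient type considered here exist and are quasiprojective), so $\overline{\mathcal{M}}_g(X,i_*\beta)$ is a Deligne--Mumford stack with a perfect obstruction theory and a virtual fundamental class; (2) by hypothesis~(\ref{iden}), $\overline{\mathcal{M}}_g(S,\beta)$ is a union of connected components of $\overline{\mathcal{M}}_g(X,i_*\beta)$, hence open and closed in it, and in particular proper over $\mathrm{Spec}\,\CC$ — this properness, which is what one really needs for the integral to make sense even though $X$ is non-compact, follows because any stable map to $X$ in class $i_*\beta$ with $i_*\beta$ supported on the compact surface $S$ has image contained in $S$ (a curve class on the compact $S$ cannot "escape" to the non-compact directions of $X$, which are fibral/normal directions), so the whole of $\overline{\mathcal{M}}_g(X,i_*\beta)$ is already proper; (3) restrict the perfect obstruction theory and virtual class of $\overline{\mathcal{M}}_g(X,i_*\beta)$ to the open-and-closed substack $\overline{\mathcal{M}}_g(S,\beta)$, defining the contribution of $S$ to be $\int_{[\overline{\mathcal{M}}_g(S,\beta)]^{\mathrm{vir}}}1$ (or, more generally, paired against the appropriate insertions), exactly as in \cite{preprint}, Theorem 3.9; (4) note that because $X$ is Calabi--Yau, the obstruction theory is the expected one and the dimension count matches, so no auxiliary equivariant parameters (as in the non-CY Lemma 2.1 of \cite{preprint}) are needed — this is the sense in which the invariants "can be defined much more easily."

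The main obstacle — really the only non-formal point — is step (2): verifying that $\overline{\mathcal{M}}_g(X,i_*\beta)$ is proper despite $X$ being non-compact. The clean way to argue this is to show that every stable map $f:\Sigma\to X$ with $f_*[\Sigma]=i_*\beta$ factors through $S$. One invokes that $S$ is a (reduced, compact, snc) Cartier divisor in $X$, so $\mathcal{O}_X(S)$ is an effective line bundle; its restriction $\mathcal{O}_X(S)|_S=\omega_S^{\vee}$-type normal data pairs non-positively with compact curves on $S$ because of the Calabi--Yau / pre-shrinkable hypotheses (condition (i) of Definition~\ref{shr}: $-C\cdot J\ge 0$ for all curves on $S$, with $J$ a nonnegative combination of the $[S_i]$), while $\mathcal{O}_X(S)\cdot f_*[\Sigma]\ge 0$ for any effective curve class not contained in $S$; squeezing these forces $f(\Sigma)\subseteq S$. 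Once image-containment in the compact $S$ is known, boundedness and properness of the moduli stack follow from the usual Gromov compactness / valuative-criterion arguments applied over the projective coarse space of $S$. With properness in hand, steps (1), (3), (4) are routine invocations of the deformation theory of stable maps and of \cite{preprint}, Theorem 3.9, so the proposition follows. Alternatively — and this is the route I would actually write down if the normal-bundle positivity estimate turned out to be delicate for some boundary component — one can simply cite shrinkability: a shrinkable $X$ contracts $S$ to a compact locus (a point plus non-compact curves), and any stable map in a class supported on the contracted compact part has proper moduli, so again $\overline{\mathcal{M}}_g(X,i_*\beta)=\overline{\mathcal{M}}_g(S,\beta)$ up to the assumed component decomposition, and the contribution is well-defined.
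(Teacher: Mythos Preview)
The paper gives no proof of this proposition at all: it is stated as an immediate consequence of \cite{preprint}, Theorem~3.9, the only remark being (in the preceding paragraph) that because $X$ is now genuinely Calabi--Yau, the auxiliary Lemma~2.1 of \cite{preprint} is unnecessary and the invariants ``can be defined much more easily.'' Your first paragraph already captures exactly this, so at that point you are done and aligned with the paper.

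Everything from your enumerated steps onward is extra relative to the paper, and some of it is shakier than it needs to be. The properness discussion in your step~(2) and final paragraph is overworked: since $S$ is \emph{projective}, $\overline{\mathcal{M}}_g(S,\beta)$ is automatically a proper Deligne--Mumford stack, and the hypothesis that it is open and closed in $\overline{\mathcal{M}}_g(X,i_*\beta)$ is precisely what lets you restrict the virtual class; you never need to know that the ambient moduli space $\overline{\mathcal{M}}_g(X,i_*\beta)$ is itself proper. Your attempt to prove the stronger statement --- that every stable map in class $i_*\beta$ factors through $S$ --- would in fact contradict the setup: the paper explicitly notes just after the proposition that the component hypothesis fails for some curve classes on a shrinkable surface, so such factorization cannot hold in general. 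Relatedly, your positivity estimate uses $\mathcal{O}_X(S)|_S$ as if it were governed by condition~(i) of Definition~\ref{shr}, but that condition concerns $J=\sum a_i[S_i]$ with possibly unequal $a_i$, not $[S]=\sum[S_i]$, so the squeeze argument does not go through as written. None of this affects the proposition, since that extra properness is simply not needed.
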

The condition for this proposition holds for all curve classes when $\omega_S^\vee$ is ample. For a shrinkable surface, this is not necessarily true. Rather, this is true for \emph{some} curve classes \cite{preprint}. This restriction is consistent with physics. The curve classes that do not have local GW invariants correspond to holomorphic 2-cycles that do not generate BPS states and they are often manually removed as in the case of $(-2)$ curves on $\FF_2.$

\section{Examples: Local weighted projective planes}
Weighted projective stacks $\mathcal{P}(a_0,a_1,a_2)$ of dimension 2 provide many interesting examples. These are canonical stacks associated with the underlying weighted projective plane thought of as a variety.  

For a weighted projective stack $\mathcal{P}(a_0,\dots,a_n)$ to be an orbifold, we need to have $d=\gcd(a_0,\dots,a_n)=1$ (see for instance \cite{toricdm,wps}). If $d>1$, then there is a natural map 
\begin{equation}
    \mathcal{P}(a_0,\dots,a_n)\to \mathcal{P}\left(\frac{a_0}{d},\dots,\frac{a_n}{d}\right)
\end{equation}
which defines a $\mu_d$-gerbe over the base, which is an orbifold. A weighted projective plane $\mathcal{P}(a_0,a_1,a_2)$ can be covered by 3 local charts
\begin{equation}
    U_i\simeq[\CC^2/\mu_{a_i}]
\end{equation}
where $\CC^2$ is $\CC^3$ with one coordinate set equal to 1 with the action given by
\begin{align}
    \zeta_{a_0}\cdot(1,z_1,z_2)=(1,\zeta^{a_1}z_1,\zeta^{a_2}z_2)\\
    \zeta_{a_1}\cdot(z_0,1,z_2)=(\zeta^{a_0}z_0,1,\zeta^{a_2}z_2)\\
    \zeta_{a_2}\cdot(z_0,z_1,1)=(\zeta^{a_0}z_0,\zeta^{a_1}z_1,1)
\end{align}
where $\zeta_{a_i}=e^{\frac{2\pi i}{a_i}}\in \mu_{a_i}.$

A weighted projective stack is smooth as a stack (its coarse moduli space is often not smooth), so there is a canonical line bundle. By the local $\mathcal{P}(a_1,a_2,a_3)$, we mean the total space of the canonical bundle of $\mathcal{P}(a_1,a_2,a_3)$, which is a CY 3-orbifold. Even when the coarse moduli space of $\mathcal{P}(a_1,a_2,a_3)$ is smooth, the coarse moduli space of the local surface can have (not necessarily isolated) singularities. When resolved, the resulting (smooth and non-compact) CY 3-folds provide interesting examples. In general, singularities are not just transverse $A_n$ singularities. Therefore a crepant resolution may not be unique (for instance, one can flop after resolving singularities).

Since we are dealing with toric stacks, taking a root stack along a torus invariant divisor corresponds to extending the ray $\rho$ corresponding to that divisor. In particular, taking root stacks can be thought of as stacky blow-ups \cite{func}. Using this, we can easily draw a fan for some weighted projective orbifold $\mathcal{P}(a_0,a_1,a_2).$

\begin{ex}
    Consider the (classical) weighted projective plane $\PP(1,1,3)$. It has a unique singular point which is a cyclic quotient singularity. Locally near the singular point, it is described by
    \begin{equation}
        \omega\cdot(x,y)\mapsto (\omega x,\omega y), \omega=e^{2\pi i/3}
    \end{equation}
    which is usually called a $\frac{1}{3}(1,1)$ type singularity. The canonical stack of this surface is the weighted projective stack $\mathcal{P}(1,1,3)$. The total space of the canonical bundle is defined in the usual way, which is a CY 3-orbifold.
    
    We then consider its coarse moduli space. Over the singular point, the fiber has the induced action $\omega\cdot z\mapsto \omega z$, because the action lifts to $\omega\cdot (dx\wedge dy)=\omega^2dx\wedge dy.$ This, in particular, implies that the only singularity is the singular point in the zero section. This is a 3-fold cyclic quotient singularity $\frac{1}{3}(1,1,1)$ which has a crepant resolution with the exceptional divisor isomorphic to $\PP^2.$

    In the resulting smooth CY 3-fold $Y$, the only compact divisors are $\PP^2\cup \FF_3$ where a line $\ell$ in $\PP^2$ is glued to the $(-3)$ curve in $\FF_3$. Hence we observe that $\PP^2\cup\FF_3$ arises from `local $\mathcal{P}(1,1,3)$'. This is a special instance of Theorem~\ref{mainthm2} and has already been studied in the literature \cite{frac}.
\end{ex}

\begin{ex}
    Consider the weighted projective stack $\mathcal{S}=\mathcal{P}(1,2,2)$. This is isomorphic to a root stack $\sqrt{\ell/\PP^2}$. Again we take the total space $\mathcal{X}=$Tot$(\omega_{\mathcal{S}})$ of the canonical bundle of $\mathcal{S}$ which is an orbifold CY 3-orbifold.
    
    Consider the coarse moduli space $X$ of $\mathcal{X}$. For this, the stacky locus is of codimension 1 instead of 2, so we need to understand the action on the fiber over the stacky locus $\ell$. Now the local model is given by a reflection $(x,y)\mapsto (x,-y)$. It follows that the fiber has a nontrivial action of $(-1)$. Therefore $X$ has transverse $A_1$ singularities along $\ell$ in the zero section (isomorphic to $\PP^2$, as it is the coarse moduli space of $\mathcal{P}(1,2,2)$. It has a small resolution by blowing up the singular locus, which is automatically crepant. Resolving this singularity, again we get $\PP^2\cup \FF_3$ inside a smooth CY 3-fold $Y$. This is a special case of Theorem~\ref{main}
\end{ex}

\begin{ex}\label{f2sh}
    When considering the coarse moduli space of a local projective plane, it is possible to have singularities not contained in the zero section as expected in Lemma~\ref{iso}. Consider $\mathcal{P}(1,1,2)$. It is isomorphic to a singular quadric cone in $\PP^3$. When we consider its local surface, then its coarse moduli space has singularities along the singular point. This is because the stabilizer group $\mu_2$ of the vertex of the cone acts trivially on the fiber of the canonical bundle. Therefore, we get transverse $A_1$ singularities again, but along a non-compact curve. Resolving this singularity, we get a copy of $\FF_2\cup R$ where $R$ is a non-compact divisor. Note that $\FF_2$ is not del Pezzo, but together with this non-compact divisor, it can be contracted to $\CC^3/\mu_4$ inside a smooth CY 3-fold.

\end{ex}

\begin{ex}\label{z7}
   Consider $\mathcal{P}(1,2,4)$. It is well-known that the underlying variety is just $\PP(1,1,2)$. If we consider the toric DM stack obtained by taking root construction along $\PP(1,2)\subset \PP(1,1,2)$, we get a toric stack that has a cyclic Picard group. By \cite{toricdm}, it must be a weighted projective stack. This way we get an identification $\mathcal{P}(1,2,4)\simeq \sqrt{\mathcal{P}(1,2)/\mathcal{P}(1,1,2)}.$ Taking the total space of the canonical bundle, we get a generically transverse $A_1$ singularities along the line. Moreover, the vertex has an additional $\mu_2$ stabilizer, so this time the singularity is localized on the zero section. Looking at the local chart, it shows that the singularity has type $\frac{1}{4}(1,2).$ Note that the stacky structure is non-isolated as 2 and 4 are not relatively prime. It has generically transverse $A_1$ singularities with one dissident point which has quotient singularity type $\frac{1}{4}(1,1,2)$. Drawing a toric diagram we can see that we get $\FF_2\cup\FF_2\cup\FF_2$ as a unique compact divisor in a resolution. This surface can be contracted to a cyclic quotient singularity $\CC^3/\mu_7$ with wieght $(1,2,4)$.

 \[   \begin{tikzpicture}
\draw[step=1cm,gray,very thin] (-3.9,-3.9) grid (2.9,1.9);
\draw[thick] (0,0) -- (1,0);
\draw[thick] (0,-1) -- (1,0);
\draw[thick] (1,0) -- (0,1);
\draw[thick] (0,1) -- (0,0);
\draw[thick] (0,0) -- (-1,-2);
\draw[thick] (0,0) -- (0,-1);
\draw[thick] (-1,-2) -- (-2,-4);
\draw[thick] (-1,-2) -- (0,1);
\draw[thick] (0,-1) -- (-2,-4);
\draw[thick] (-1,-2) -- (0,-1);
\draw[thick] (0,1) -- (-2,-4) -- (1,0) -- (0,1);
\fill (0,1) circle[radius=1.5pt] node[above right] {\tiny $(0,1)$};
\fill (-1,-2) circle[radius=1.5pt] node[above left] {\tiny $(-1,-2)$};
\fill (1,0) circle[radius=1.5pt] node[below right] {\tiny $(1,0)$};
\fill (0,-1) circle[radius=1.5pt] node[below right] {\tiny $(0,-1)$};
\fill (0,0) circle[radius=1.5pt] node[below right] {\tiny $(0,0)$};
\fill (-2,-4) circle[radius=1.5pt] node[below right] {\tiny $(-2,-4)$};
\end{tikzpicture}
\]
\captionof{figure}{Section of a toric fan of local $\mathcal{P}(1,2,4)$. Three interior vertices correspond to $\FF_2.$}

\end{ex}

\begin{ex}
    Consider $\mathcal{P}(1,2,6)$. The fan for the total space of the canonical bundle of this weighted projective plane on $z=1$ plane is the following.

 \[  \begin{tikzpicture}
\draw[step=1cm,gray,very thin] (-2.9,-3.9) grid (2.9,1.9);
\draw[thick] (0,0) -- (2,0);
\draw[thick] (0,-2) -- (0,1);
\draw[thick] (0,0) -- (-1,-3);
\draw[thick] (0,-1) -- (-1,-3);
\draw[thick] (0,-1) -- (1,0);
\draw[thick] (1,0) -- (1,-1);
\draw[thick] (0,-1) -- (1,-1);
\draw[thick] (1,0) -- (0,1);
\draw[thick] (0,1) -- (-1,-3) -- (2,0) -- (0,1);
\fill (0,1) circle[radius=1.5pt] node[above right] {\tiny $(0,1)$};
\fill (2,0) circle[radius=1.5pt] node[above right] {\tiny $(2,0)$};
\fill (-1,-3) circle[radius=1.5pt] node[below right] {\tiny $(-1,-3)$};
\fill (1,0) circle[radius=1.5pt] node[below right] {\tiny $(1,0)$};
\fill (0,-1) circle[radius=1.5pt] node[below right] {\tiny $(0,-1)$};
\fill (0,0) circle[radius=1.5pt] node[below right] {\tiny $(0,0)$};
\filldraw[red] (0,-2) circle[radius=1.5pt];
\filldraw[red] (1,-1) circle[radius=1.5pt];
\end{tikzpicture}
\]
\captionof{figure}{Section of a toric fan of local $\mathcal{P}(1,2,6)$}
The red nodes represent non-compact divisors and the interior nodes represent compact divisors. Note also that this crepant resolution is not unique because we can flop one rational curve.

The coarse moduli space of the total space of the canonical bundle of $\mathcal{P}(1,2,6)$ has non-isolated singularities. It is not just transverse ADE singularities as it has a dissident point in Miles Reid's terminology. The existence of a square (with vertices (0,0),(1,0),(0,-1), and (1,-1)) tells us that we can flop a $\PP^1$, so the crepant resolution is not unique.
\end{ex}
\begin{ex}
    Toric diagram for $\mathcal{P}(1,3,4).$ The total space contains $\PP^2$ and some blow-ups of 3 points of $\FF_2$. One rational curve can be flopped to $\FF_1$ and blow-ups of two points of $\FF_1$.

 \[   \begin{tikzpicture}
\draw[step=1cm,gray,very thin] (-3.9,-3.9) grid (2.9,1.9);
\draw[thick] (0,0) -- (1,0);
\draw[thick] (0,-1) -- (1,0);
\draw[thick] (1,0) -- (0,1);
\draw[thick] (0,1) -- (0,0);
\draw[thick] (-1,-1) -- (1,0);
\draw[thick] (-1,-1) -- (-1,-2);
\draw[thick] (-1,-1) -- (-2,-3);
\draw[thick] (0,0) -- (-1,-1);
\draw[thick] (-1,-1) -- (0,1);
\draw[thick] (-1,-1) -- (-3,-4);
\draw[thick] (-1,-1) -- (0,-1);
\draw[thick] (0,1) -- (-3,-4) -- (1,0) -- (0,1);
\fill (0,1) circle[radius=1.5pt] node[above right] {\tiny $(0,1)$};
\fill (-1,-1) circle[radius=1.5pt] node[above left] {\tiny $(-1,-1)$};
\fill (-1,-2) circle[radius=1.5pt] node[below right] {\tiny $(-1,-2)$};
\fill (1,0) circle[radius=1.5pt] node[below right] {\tiny $(1,0)$};
\fill (0,-1) circle[radius=1.5pt] node[below right] {\tiny $(0,-1)$};
\fill (0,0) circle[radius=1.5pt] node[below right] {\tiny $(0,0)$};
\fill (-2,-3) circle[radius=1.5pt] node[below right] {\tiny $(-2,-3)$};
\fill (-3,-4) circle[radius=1.5pt] node[below right] {\tiny $(-3,-4)$};
\filldraw[red] (0,-1) circle[radius=1.5pt];
\filldraw[red] (-1,-2) circle[radius=1.5pt];
\filldraw[red] (-2,-3) circle[radius=1.5pt];
\end{tikzpicture}
\]

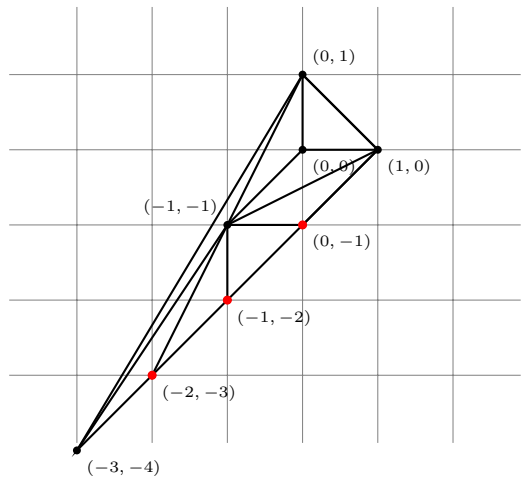
\captionof{figure}{Section of a toric fan of local $\mathcal{P}(1,3,4)$}
\end{ex}

All of these examples are isolated 3-fold quotient singularities. The following is a generalization of local $\PP^2$ viewed as a crepant resolution of $\CC^3/\mu_3.$
\begin{prop}(\cite{frac})
    Let $\CC^3/\mu_n$ be an isolated 3-fold cyclic quotient singularity where the action of $\mu_n$ is given by
    \begin{equation}
        \zeta\cdot(x,y,z)\mapsto(\zeta^a x,\zeta^by,\zeta^cz)\text{ where }\zeta=e^{2\pi i/n}
    \end{equation}
    with $a+b+c=n$, gcd$(a,b,c)$=1 and $a,b,c>0$. Then it has a partial resolution given by the local weighted projective plane $\mathcal{P}(a,b,c)$ (which is an orbifold).
\end{prop}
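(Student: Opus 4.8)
The plan is to construct the local weighted projective plane $\mathcal{P}(a,b,c)$ as a toric DM stack and exhibit a toric morphism from $\mathrm{Tot}(\omega_{\mathcal{P}(a,b,c)})$ to $\CC^3/\mu_n$ that is an isomorphism over the smooth locus. First I would set up the toric picture for the source. Recall that $\CC^3/\mu_n$ with weights $(a,b,c)$ is the affine toric variety associated to the cone $\sigma$ in $N_{\mathbb{R}} = \mathbb{R}^3$ spanned by the standard basis $e_1,e_2,e_3$, where the lattice is enlarged to $N = \ZZ^3 + \ZZ\cdot\frac{1}{n}(a,b,c)$. The interior lattice point $v_0 := \frac{1}{n}(a,b,c)$ lies in the relative interior of $\sigma$ (since $a,b,c>0$ and $a+b+c=n$ forces $v_0$ onto the simplex with vertices $e_1,e_2,e_3$), and the coprimality hypothesis $\gcd(a,b,c)=1$ guarantees $v_0$ is a primitive vector of $N$. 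Subdividing $\sigma$ by inserting the ray through $v_0$ produces the three subcones $\langle v_0, e_i, e_j\rangle$; this is exactly the fan of the total space of the canonical bundle of $\mathcal{P}(a,b,c)$, since the exceptional locus of the corresponding toric morphism $\widetilde{X}\to \CC^3/\mu_n$ is the toric divisor $D_{v_0}$ whose associated fan (the star of $v_0$) is the fan of $\PP(a,b,c)$, and the canonical-bundle/Calabi--Yau condition translates precisely into all the primitive ray generators $v_0,e_1,e_2,e_3$ lying on the affine hyperplane $\{$last coordinate $=1\}$ after a suitable $GL$-change of coordinates, equivalently $\sum_i a_i D_i \sim K$.

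The key steps in order: (1) verify $v_0$ is primitive in $N$ using $\gcd(a,b,c)=1$, and that $v_0 \in \mathrm{int}(\sigma)$ using $a,b,c>0$; (2) identify the star fan of $v_0$ in the subdivision with the standard fan of the weighted projective stack $\mathcal{P}(a,b,c)$ — here one uses the stacky fan language of \cite{toricdm}, recording the rays $e_1,e_2,e_3$ together with the DM/stacky data coming from $N/\langle v_0\rangle$, so that the divisor $D_{v_0}$ with its stack structure is $\mathcal{P}(a,b,c)$ and not merely its coarse space; (3) check that the subdivision is \emph{crepant}, i.e. $v_0$ lies on the hyperplane through $e_1,e_2,e_3$, which holds because $\frac{a}{n}+\frac{b}{n}+\frac{c}{n}=1$ — this is what makes $\mathrm{Tot}(\omega_{\mathcal{P}(a,b,c)})$ Calabi--Yau and realizes it as a \emph{partial} crepant resolution; (4) note that the singularities of $\widetilde{X}$ are exactly the cyclic quotient singularities coming from the subcones $\langle v_0,e_i,e_j\rangle$ that are not smooth, which are the transverse/isolated quotient singularities inherited from the stacky structure of $\mathcal{P}(a,b,c)$ along its torus-fixed points, consistent with the discussion preceding the statement.

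I expect the main obstacle to be step (2): carefully matching the \emph{stacky} structure on the exceptional divisor with $\mathcal{P}(a,b,c)$ as an orbifold, rather than just matching coarse toric varieties. One must check that the lattice quotient $N/(\ZZ v_0)$ together with the images of $e_1,e_2,e_3$ reproduces the Gale-dual data defining $\mathcal{P}(a,b,c)$ — equivalently, that in each chart $U_i\simeq[\CC^2/\mu_{a_i}]$ the local action agrees with the one induced from the ambient toric stack. Concretely, in the chart around the torus-fixed point corresponding to omitting $e_i$, the relevant finite group is $N/(\ZZ v_0 + \ZZ e_j + \ZZ e_k)\simeq \mu_{a_i}$ (using $\gcd(a,b,c)=1$ to see this quotient is cyclic of the claimed order), and its action on the two remaining coordinates is the weighted action $\zeta\mapsto(\zeta^{a_j}, \zeta^{a_k})$; verifying this identification is a short but slightly delicate lattice computation, after which the conclusion follows. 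Once the local pictures match, the total space of the canonical bundle of $\mathcal{P}(a,b,c)$ is identified with $\widetilde{X}$ over $\CC^3/\mu_n$, giving the asserted partial resolution, and one invokes the Example above (Tot of $\omega$ of an orbifold is an orbifold Calabi--Yau) to conclude $\mathcal{P}(a,b,c)$ is indeed an orbifold as claimed.
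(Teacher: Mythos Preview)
The paper does not supply its own proof of this proposition; it is simply quoted from the reference \cite{frac}. Your toric argument is correct and is the standard way to establish the statement: realize $\CC^3/\mu_n$ as the affine toric variety for the simplicial cone $\langle e_1,e_2,e_3\rangle$ in the overlattice $N=\ZZ^3+\ZZ\cdot\tfrac{1}{n}(a,b,c)$, star-subdivide at the primitive interior lattice point $v_0=\tfrac{1}{n}(a,b,c)$ (primitivity uses $\gcd(a,b,c)=1$; $v_0$ lies on the height-one slice through $e_1,e_2,e_3$ because $a+b+c=n$, which is exactly the crepancy condition), and identify the star of $v_0$ with the stacky fan of $\mathcal{P}(a,b,c)$. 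The lattice computation you flag in step~(2)---that $N/(\ZZ v_0+\ZZ e_j+\ZZ e_k)\simeq\mu_{a_i}$ with the correct induced weights---is indeed the only point requiring care, and your description of it is accurate. There is nothing in the paper to compare against.
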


When $a=b=c=1$, this reduces to local $\PP^2$. Observe also that there is more than one partial resolution by a weighted projective plane. For instance, $\CC^3/\mu_5$ has two partial resolutions by $\mathcal{P}(1,2,2)$ and $\mathcal{P}(1,1,3)$. 

\begin{rmk}
    It is not true that the cyclic quotient singularity $\CC^3/\mu_n$ is determined by $n$. For instance, there are two nonequivalent quotients $\CC^3/\mu_7.$ Although the above proposition gives 4 projective planes $\mathcal{P}(1,1,5),\ \mathcal{P}(2,2,3),\ \mathcal{P}(1,3,3),\ \mathcal{P}(1,2,4)$ when $n=7$, the singularity whose partial resolution is given by $\mathcal{P}(1,2,4)$ is different from all the other three. 
\end{rmk}

In this paper we only used transverse $A_n$ singularities and isolated quotient singularities. As there are far more 3-fold canonical singularities, such as the ones discussed in this section, it would be interesting to find a systematic way of constructing other canonical singularities that comes from shrinkable surfaces.

\end{document}